\newtheorem{thm}{Theorem}[subsection]
\newtheorem{prop}[thm]{Proposition}
\newtheorem{lm}[thm]{Lemma}
\newtheorem{cor}[thm]{Corollary}
\newtheorem{thmi}{Theorem}
\newtheorem*{thmig}{Theorem}
\theoremstyle{remark}
\newtheorem{rem}[thm]{Remark}
\newtheorem{notation}[thm]{Notations}
\newtheorem{ex}[thm]{Example}
\theoremstyle{definition}
\newtheorem{df}[thm]{Definition}
\newtheorem{num}[thm]{}
\newtheorem{assumption}[thm]{Assumption}
\numberwithin{equation}{thm}
\numberwithin{equation}{thm}
\DeclareRobustCommand*\cal{\@fontswitch\relax\mathcal}
\DeclareMathOperator{\DM}{DM} %derived motives
\DeclareMathOperator{\DMA}{DM^A_{sm}} %smooth Artin motives
\DeclareMathOperator{\DMAT}{DM^{AT}_{sm}} %smooth Artin-Tate motives
\DeclareMathOperator{\DMATg}{DM^{\cArt T}_{sm}} %smooth \cArt Tate motives
\DeclareMathOperator{\DMAwT}{DM^{A_{tr}T}_{sm}} %well ramified smooth Artin-Tate motives
\DeclareMathOperator{\DMATS}{DM^{AT}_{tr}(\FS)} %\FS-constructible well-ramified Artin-Tate motives
\DeclareMathOperator{\ATS}{AT_{tr}(\FS)}
\DeclareMathOperator{\DMATSZ}{DM^{AT}_{tr}(\FS_{\emph{Z}})}
\DeclareMathOperator{\DMATSP}{DM^{AT}_{tr}(\FS / \Phi)} %images under \pi of \FS-constructible well-ramified Artin-Tate motives
\DeclareMathOperator{\ATSP}{AT_{tr}(\FS / \Phi)} %Chow motives which are images under \pi of \Phi-constructible well-ramified Artin-Tate motives
\DeclareMathOperator{\ATwr}{AT_{tr}}
\DeclareMathOperator{\SH}{SH} %stable homotopy category
\DeclareMathOperator{\Chow}{CHM} %stable homotopy category
\newcommand{\sm}{\mathit{Sm}}
\DeclareMathOperator{\Sh}{Sh} %faisceaux
\newcommand{\cArt}{\mathcal A} %a subcategory of Artin representations
\DeclareMathOperator{\Ker}{Ker}
\newcommand{\ilim} { \varinjlim }
\newcommand{\an}{\mathrm{an}}
\newcommand{\HM}{\mathcal H_M} %motivic cohomology
\DeclareMathOperator{\Hom}{Hom}
\DeclareMathOperator{\End}{End}
\newcommand{\Spec}[1]{\operatorname{Spec}(#1)}
\DeclareMathOperator{\car}{car}
\DeclareMathOperator{\disc}{disc}
\DeclareMathOperator{\Der}{D}
\DeclareMathOperator{\derR}{R}
\newcommand{\Rep}{\operatorname{Rep}^{\mathrm A}} % continuous representations
\newcommand{\ZZ} {\mathbb Z}
\newcommand{\QQ} {\mathbb Q}
\newcommand{\CC} {\mathbb C}
\renewcommand{\AA} {\mathbb A}
\newcommand{\PP} {\mathbb P}
\newcommand{\un}{\mathbbm 1} %motif constant
\newcommand{\nis}{{\mathrm{Nis}}}
\newcommand{\et}{\mathrm{\acute{e}t}}
\newcommand{\BF}{{\mathbb{F}}}
\newcommand{\BP}{{\mathbb{P}}}
\newcommand{\BQ}{{\mathbb{Q}}}
\newcommand{\BZ}{{\mathbb{Z}}}
\newcommand{\Fg}{{\mathfrak{g}}}
\newcommand{\Fl}{{\mathfrak{l}}}
\newcommand{\FC}{{\mathfrak{C}}}
\newcommand{\FS}{{\mathfrak{S}}}
\newcommand{\CA}{{\cal A}}
\newcommand{\CO}{{\cal O}}
\newcommand{\one}{\mathds{1}}
\newcommand{\rad}{\mathop{\rm rad}\nolimits} 
\newcommand{\id}{\mathop{\rm id} \nolimits}
\newcommand{\Perv}{\mathop{\rm Perv} \nolimits}
\newcommand{\red}{\mathop{\rm red} \nolimits}
\newcommand{\CH}{\mathop{\rm CH}\nolimits}
\newcommand{\q}[1]{``#1''}
\def\CHM{\mathop{\rm CHM}\nolimits}
\def\HM{\mathop{\rm HM}\nolimits}
\def\Corr{\mathop{\rm Corr}\nolimits}
\def\Rad{\mathop{\rm Rad}\nolimits}
\def\Fl{\mathop{\rm Fl}\nolimits}
\def\Hom{\mathop{\rm Hom}\nolimits}
\def\Sing{\mathop{\rm Sing}\nolimits}
\def\dim{\mathop{\rm dim}\nolimits}
\def\ker{\mathop{\rm ker}\nolimits}
\def\corank{\mathop{\rm corank}\nolimits}
\def\Qbar{{\overline Q}}
\def\Xbar{{\overline X}}
\def\Vbar{{\overline V}}
\def\Ebar{{\overline E}}
\def\qbar{{\overline q}}
\author{M. Cavicchi}
\email{cavicchi@unistra.fr}
\address{IRMA (UMR7501), 7 rue Ren\'e-Descartes 67084 Strasbourg Cedex; France}
\author{F.~D{\'e}glise} 
\email{frederic.deglise@ens-lyon.fr}
\address{CNRS, IMB (UMR5569), ENS de Lyon, UMPA; 46 all\'ee d'Italie 69364 Lyon Cedex 07; France}
\author{J.~Nagel}
\email{johannes.nagel@u-bourgogne.fr}
\address{IMB (UMR5584), CNRS, Universit\'e Bourgogne-Franche-Comt\'e; 9 avenue Alain Savary 21000 Dijon Cedex; France}
\begin{document}

\title[Motivic decompositions of families with Tate fibers]{Motivic decompositions of families with Tate fibers: smooth and singular cases}

%\classification{19E15,14F42,14C15}

\begin{abstract}
We apply Wildeshaus's theory of motivic intermediate extensions to the motivic decomposition conjecture, formulated by Deninger-Murre and Corti-Hanamura.
 We first obtain a general motivic decomposition for the Chow motive of an arbitrary smooth projective family $f:X \rightarrow S$
 whose geometric fibers are Tate. Using Voevodsky's motives with rational coefficients, the formula is valid for an arbitrary regular base
 $S$, without assuming the existence of a base field or even of a prime integer $\ell$ invertible on $S$. This result,
 and some of Bondarko' ideas, lead us to a generalized formulation of Corti-Hanamura's conjecture.
 Secondly we establish the existence of the motivic decomposition when $f:X \rightarrow S$ is a projective quadric bundle
 over a characteristic $0$ base, which is either sufficiently general
 or whose discriminant locus is a normal crossing divisor. This provides a motivic lift of the Bernstein-Beilinson-Deligne decomposition in this setting. 
%Motivated by the work of Corti-Hanamura and Bondarko, we introduce, for a proper morphism of schemes $f:X \rightarrow S$ with regular source,
% the notion of \emph{BCH-decomposition} of the relative  Chow motive $h_S(X)$ in the category $\DM_c(S, \QQ)$ of constructible motives over $S$.
% When $S$ is quasi-projective over an algebraically closed field of characteristic zero or over the separable closure of a finite field,
% this gives a motivic lift of the decomposition provided by the theorem of Beilinson-Bernstein-Deligne-Gabber.
% Using Bondarko's theory of weight structures and Wildeshaus' theory of motivic intermediate extensions, we show that BCH-decompositions exist for any regular $S$
% when $f$ is smooth with geometric fibers whose motive is Tate, and for $S$ quasi-projective over a field of characteristic zero,
% when $f:X \rightarrow S$ is a quadric bundle which is either \emph{regular} (i.e. \q{sufficiently generic})
% or with discriminant locus being a normal crossings divisor.
\end{abstract}

\maketitle
\tableofcontents 

\section{Introduction}

Consider a proper, regular scheme $X$ over some base scheme $S$. A first approximation to the questions that we study in this paper may be stated as follows: if the cohomology of each geometric fiber of $X$ over $S$ admits an \q{easy} description, does the cohomology of the whole of $X$, relatively to $S$, admit such a description as well? 

In order to make this question more precise, let us start by looking at the case of a \emph{smooth} morphism $f:X \rightarrow S$. We have then the \emph{universal} cohomological invariants of each fiber $X_s$, $s \in S$, and of the scheme $X$ over $S$, at our disposal: namely, the \emph{Chow motive} $h(X_s)$ of each fiber, over the residue field $k(s)$, and the \emph{relative Chow motive}\footnote{Chow motives are meant to be with rational coefficients. For a recall of the definition of the category of Chow motives over a general base scheme, see Section \ref{sec:weight}.} $h_S(X)$. The first notion of \q{easy} description of $h(X_s)$ which comes to mind is then provided by the requirement that $h(X_s)$ be a \emph{Tate motive}, i.e. isomorphic to a finite direct sum of twists $\one(n_i)$ of the unit object, where $\{ n_i \}_{i \in I}$ is a finite collection of integers. 

When $S$ is the spectrum of an algebraically closed field $k$, both the motive $h_s(X)$ of the unique geometric fiber and the relative motive $h_S(X)$ coincide of course with the Chow motive $h(X)$ of $X$. Fix some Weil cohomology theory $H^{*}( \cdot )$ on the category of smooth projective varieties over $k$. The following result provides a criterion for $h(X)$ to be Tate, under additional assumptions: 
\begin{thmig}[Kimura, Vial]
 Assume $k$ is an algebraically closed field of characteristic $0$.
 Suppose that $X$ is a smooth projective variety over $k$ such that the Chow motive $h(X)$ is \emph{finite dimensional} (in the sense of Kimura) and such that $H^*(X)$ is algebraic (i.e, the rational cycle class maps $\CH^i(X)_{\QQ}\to H^{2i}(X,\QQ)$ are surjective for all $i$.). Then 
$$
h(X) = \bigoplus_i \QQ(-i)^{b_{2i}}
$$
is Tate.
\end{thmig}
 This result generalises a theorem of Jannsen and was observed by Vial \cite{Vial:projectors} and Kimura \cite{Kimura} (in the case where one replaces  $h(X)$ by an arbitrary Chow motive $M$). A proof will be sketched in Section \ref{sec:dec}. 
 
Our first theorem may be seen as the natural generalization of the previous one to the relative case. A first result in this direction was obtained by Vial in \cite[Thm. 4.2]{Vial:fibrations}. It states that if $f:X\to S$ is a flat morphism between $k$-varieties such that
 $\CH^i(X_s)\otimes\QQ\cong\QQ$ for all $i$ and for all $s\in S(\Omega)$
 (where $\Omega$ is a universal domain over $k$) then the relative motive $h_S(X)$ is a Tate motive.
 Our theorem extends this result in two directions. First we consider geometric fibers: this implies
 that we will get more general motives than Tate motives.
 Secondly, we work over an arbitrary regular base scheme $S$,
 with no assumption on the existence of a base field or of a prime $\ell$ invertible on $S$.
% \footnote{This
% is the first example of a \emph{BCH-decomposition}: see Def. \ref{df:BCH}
% and the following paragraph for further discussion of its properties.} 
  In order to give the statement, recall that a representation of a group $G$ (say, on a finite dimensional $\BQ$-vector space) is \emph{Artin} if it factors through a finite quotient of $G$.
\begin{thmi}[see Theorem \ref{thm:decomposition1}]\label{thmi:decomposition1}
Let $S$ be a regular connected scheme with \'etale fundamental group $\pi$. 
Let $f:X \rightarrow S$ be a smooth proper morphism whose geometric fibers are Tate
 (\emph{i.e.} the associated Chow motives are Tate).
Then there exists an isomorphism of motives over $S$:
$$
h_S(X) \simeq \bigoplus_{i \in I} \rho_!(V_i)(-n_i)[-2n_i]
$$
where $I$ is a finite set, $V_i$ is a simple Artin $\QQ$-representation of $\pi$,
 $n_i$ is a non-negative integer and $\rho_!(V_i)$
 is the rational motive naturally associated with $V_i$.

Moreover, the set $\big\{\big([V_i],n_i\big), i \in I \big\}$ where $[V_i]$
 denotes the isomorphism class in the category of Artin representations $\Rep_\QQ(\pi)$
 of $\pi$  is uniquely determined. 
% by the property that for any prime integer $\ell$ and
% any $n \geq 0$:
%\begin{equation*}
%\derR^{2n} f'_*(\QQ_\ell) \simeq \bigoplus_{i \in I \mid n_i=n} V'_i \otimes_\QQ \QQ_\ell(-n_i)
%\end{equation*}
%where $f'$ and $V'_i$ are pullbacks of $f$ and $V_i$ along the open immersion
% $U[\ell^{-1}] \rightarrow U$, 
% and $V'_i \otimes_\QQ\QQ_\ell$ denotes the $\QQ_\ell$-sheaf represented by $V'_i$.
\end{thmi}

%In the above statement, the functor $\rho_!$ sends any \'etale sheaf over $S$ to its image in the category $\DM_c(S)$.
% We show that when restricted to the bounded derived category of Artin $\pi$-representations,
% it induces an equivalence of categories with the full subcategory $\DMATg(S,\QQ)$ of \emph{smooth Artin-Tate motives} in $\DM_c(S)$
% (Prop. \ref{prop:Artin_motives}),
% extending a classical result of Voevodsky-Orgogozo (the case when $S$ is the spectrum of a field).
% Hence, 
The property which characterizes the set $\big\{\big([V_i],n_i\big), i \in I \big\}$ will be formulated in a precise way in the complete statement of Theorem A, see Theorem \ref{thm:decomposition1}. For the moment being, let us just point out that Theorem A shows that the naive generalisation of the theorem of Kimura-Vial to the relative situation is false:
 if $f:X\to S$ is a smooth, projective morphism such that $h(X_s)$ is  finite dimensional and such that $H^*(X_s)$ is algebraic for all $s\in S$,
 the relative motive $h_S(X)$ does not necessarily decompose as a direct sum
 of relative Lefschetz motives\footnote{This statement is even false if one assumes that the fibers are homogeneous spaces as in [Iyer]},
 since it is an object of a more general kind - an \emph{Artin-Tate} motive. This can be seen already on the topological level.
 If $f:X\to S$ is a smooth, projective morphism whose fibers have algebraic cohomology then the direct image sheaves $R^if_*\QQ$ are local systems with finite monodromy,
 but they are not necessarily constant. A concrete example is given by smooth quadric fibrations of even relative dimension (Corollary \ref{cor:sm_quadrics}).
 We have recorded in Example \ref{arith_smprquad} some constructions of smooth proper quadrics over $\Spec \BZ$,
 and more generally over rings of integers of number fields, to which our result applies; for rings of integers whose spectrum is non-simply connected,
 we give explicit cases where relative Artin-Tate motives with non-trivial monodromy actually appear.
 
\medskip{\bf The singular case.}
We want now to treat the general case of a regular scheme $X$, proper over some regular base $S$. In order to give a meaning to the motive of the \emph{singular} fibers of $f:X \rightarrow S$, one has to work in the framework of constructible \emph{rational mixed motives} $\DM_c(S)$, satisfying the six functors formalism and Grothendieck-Verdier duality. This formalism is by now available after the seminal work of Voevodsky, finalized by Ayoub and Cisinski-D\'eglise. The notion of Tate motive extends to this setting. By asking $f$ to have Tate fibers, one may still hope to obtain a nice decomposition of the motive $h_S(X)$ as in Theorem A. We will explain in Section \ref{sec:dec} that the conjectural existence of a motivic t-structure on $\DM_c(S)$ and some ideas of Bondarko give ground to this belief. One should in fact expect the existence of some suitable stratification $\{ S_{\phi} \}$ of $S$, with one stratum being the open locus $U$ of smoothness of $f$, and the pullbacks $X_{S_{\phi}}$ to the strata of $S \setminus U$ having fixed degeneration type, in some suitable sense. The desired decomposition of $h_S(X)$ should be then expressed in terms of \q{easy} objects lying over each $S_{\phi}$. 

In order to get an explicit hold on the possible degenerations of the fibers, we restrict our attention to the special situation of quadric bundles in characteristic zero. Our Theorem B then shows that in at least two reasonable cases (one of them being the \q{generic} one), the previous hypothetical picture holds indeed true. 

\begin{thmi}[see Cor. \ref{quadricCH}, Cor. \ref{quadricCHsncd}] \label{thmi:decomposition2}
Let $S$ be a scheme over a field of characteristic $0$,
 and $f:X\to S$ be a projective quadric bundle such that one of the following conditions holds:
\begin{enumerate}
\item the discriminant of $X/S$ is a normal crossing divisor in $X$;
\item $X/S$ is \emph{regular} as a quadric bundle (which means "generic" in a precise sense, see Def. \ref{def:regular}).
\end{enumerate}
We consider the stratification $\{S_{\phi}\}$ of $S$ given respectively by 
\begin{enumerate}
\item the intersections of the irreducible components of the discriminant (see Def. \ref{strat_sncd})
 in case (1);
\item corank (see after Def. \ref{def:regular}) in case (2).
\end{enumerate}
Then there exists a finite set of simple Artin representations $\{ V_{i,\phi} \}$, whose isomorphism classes are uniquely determined, such that $h_S(X)$ admits a motivic decomposition
$$
h_S(X)\simeq\bigoplus_{i,\phi} j_{\phi!*}(\rho_!(V_{i,\phi})(-n_{i,\phi}))[-2n_{i,\phi}]
$$
with $j_{\phi!*}( \cdot )$ denoting a certain extension of the given motive to a Chow motive on the closure $\overline{S_{\phi}}$, unique up to isomorphism. 
\end{thmi}

Once again, we are leaving to the main body of the paper the precise formulation of the property characterizing the extensions $j_{\phi!*}(\rho_!(V_{i,\phi})(-n_{i,\phi}))$ up to isomorphism. We want however to stress that the choice of the notation is not an accident: these objects are precisely obtained by defining and applying an \emph{intermediate extension} functor $j_{\phi!*}$ at the level of motives, modeled on the intermediate extension of perverse sheaves introduced in \cite{BBD82}. As a matter of fact, Theorem B plays precisely the role of a motivic lifting of the celebrated decomposition theorem of \emph{loc. cit.}

The existence of such motivic decompositions, in the presence of singularities of $f$,
 had been previously obtained only in a few cases. First by Gordon, Hanamura, Murre in \cite[Th. I in \textsection 2]{GHM}
 and then by Nagel and Saito in \cite{NS}, for conic bundles over a surface (see also \cite{Bouali}). The existence of motivic intermediate extensions is especially important in the motivic part of the Langlands program, where the first construction of a Chow motive realizing to intersection cohomology was provided by Scholl in \cite{Scholl}, in the case of modular curves. In recent years, Wildeshaus has started a systematic study of motivic intermediate extensions based on the formalism of $\DM(S)$ and Bondarko's weight structures. He and his students successfully applied these methods to many families of higher dimensional Shimura varieties, the most recent one being the case of \emph{genus two Hilbert-Siegel varieties} treated by the first author in \cite{Cav19}. The most general construction applies to some relative motives of "abelian type" (see \cite{Wil17}).

Wildeshaus's techniques have never been applied to the motivic decomposition problem.  This is what we do in our proof of Theorem B and indeed, the desire t obtain such application was the original motivation of our paper.

Let us conclude by observing that both our main Theorems A and B may be seen as expressing the existence of interesting \emph{algebraic cycles}, namely, the relative correspondences on $X \times_S X$ acting on $h_S(X)$ as projectors on the direct factors of our decompositions. From this viewpoint, our results provide a positive answer to the relative analogue of a well-known conjecture dealing with smooth projective varieties over a field, the \emph{Chow--K\"unneth conjecture} - itself a strengthening of the \emph{K\"unneth conjecture}, one of the famous standard conjectures on algebraic cycles. We refer the reader to Section \ref{sec:dec} for an extended discussion of this conjecture, of its relative counterparts and of the various notions of motivic decomposition that we consider in this work.

\bigskip

\noindent \textbf{Possible developments.} We believe that our method for proving Theorem \ref{thmi:decomposition2},
 based in particular on Theorem \ref{thmi:decomposition1}, should apply to other situations.
 Firstly one could investigate projective families of a different type, 
 for example degenerations of Grassmannians or Severi-Brauer varieties.
 The geometry of hyper-k\"ahler varieties should also offer a suitable playground.
 Lastly, the Example \ref{arith_smprquad} suggests that one could look at the arithmetic case,
 where we work with (flat regular) schemes over rings of integers. In particular,
 the theory of \emph{regular} quadric fibrations in this case still seems manageable according to \cite{APS}.

\section*{Plan of the paper}

In Section 2, we recall the theory of gluing and weights for rational mixed motives,
and state several versions of \emph{motivic decompositions}, in order of generality.
 We then give some basic definitions and computations of relative \emph{smooth} Artin motives,
 analog of Artin motives over a field. When $\pi$ is the fundamental group of a regular base $S$, we show an equivalence of categories between the bounded derived category of Artin $\pi$-representations and the full subcategory of constructible motives formed by smooth Artin-Tate motives $\DM_c(S)$ (Prop. \ref{prop:Artin_motives}), extending a classical result of Voevodsky-Orgogozo (the case when $S$ is the spectrum of a field). Finally, we deduce our first motivic decomposition theorem, by combining a "spreading-out" technique valid for constructible motives, the existence of weights on smooth Artin-Tate motives (see in particular Thm. \ref{thm:Artin-Tate}), and an extension argument based on Wildeshaus's analysis of weights and of minimal/intermediate extensions of
 pure motives, relying on a new notion of \emph{fair} extension (Def. \ref{df:fair_ext}). 

Section 3 deals with an extension of Wildeshaus's theory of motivic intermediate extensions
 in order to be able to deal with (smooth) Artin-Tate motives and their extensions along immersions. Recall that Wildeshaus's construction,
 given a stratification $\Phi=\{S_{\phi}\}$ of the base $S$,
 consists in constructing a glued subcategory of Chow motives starting from
 constant local systems upstairs. Based on a criterion of \emph{semi-primality}
 (see Def. \ref{semipr}), this glueing procedure guarantees
 the existence of the functors $j_{!*}$ when $j$ is the immersion between (union of) strata.
 We have two difficulties to circumvent in order to apply this construction.

The first one is that we need to consider more general local systems, as it is known
 from the work of Beauville that in the case of an algebraically closed base field,
 \emph{Prym motives} should appear in the motives of quadric bundles. As these correspond
 to an \'etale cover of degree $2$,\footnote{given by the spectrum of the associated Clifford algebra
 see \ref{num:clifford_aglebra&Prym}, or by the Stein factorization of the Fano scheme
 see the proof of Theorem \ref{quadric_AT}} we are led to consider smooth Artin-Tate motives 
 as natural \emph{motivic} local systems. But then, in order to adapt Wildeshaus's construction,
 one needs to control the degeneracy of smooth Artin-Tate motives,
 as was already done in the smooth case, in the proof of Theorem \ref{thmi:decomposition1}.
 Fortunately, by \emph{purity of the branch locus} we need only to control the degeneracy
 in codimension $1$. Then it is sufficient to assume that the smooth Artin-Tate motives are
 tamely ramified to ensure sufficient regularity of the degenerations; see the proof of
 Theorem \ref{constmot}.

The second difficulty is that to perform the glueing construction,
 Wildeshaus has to assume that the closures of strata of $\{S_{\phi}\}$ are smooth.
 This assumption is satisfied in the case (1) of Theorem \ref{thmi:decomposition2}, but not in the case (2) of regular quadric bundles. In order to deal with these singularities, we have to consider
 a suitable resolution of the strata, $\mathfrak{S} \rightarrow \Phi$, in such a way that we can control
 the pushforward of the tamely ramified smooth Artin-Tate motives appearing on the
 (abstract) blow-ups. Then Wildeshaus's glueing construction goes through and one obtains
 a suitable category of tamely ramified $(\mathfrak S/\Phi)$-constructible
 Artin-Tate motives $\DMATSP$ (see Definition \ref{imstrmot}).
 Moreover, this category satisfies the following good properties:
\begin{itemize}
\item Bondarko's weight structure restricts to this subcategory.
 The corresponding weight $0$ part, denoted by $\ATSP$,
 is a pseudo-abelian subcategory of $\CHM(S)$ which is semi-primary
 (Theorem \ref{artatesp}).
\item The functor $j_{!*}$ exists, for $j$ an inclusion of strata ,
 and commutes with the realization functors (Theorem \ref{real}).
\item The realization functors restricted to $\DMATSP$ are conservative
 (Theorem \ref{cons}).
\end{itemize}

Section 4 deals with our last theorem, the motivic decomposition of certain families
 of projective quadric bundles. We first explain our notion of regularity for quadric bundles
 (Definition \ref{def:regular}) and exploit it to construct the required resolution $\mathfrak{S} \rightarrow \Phi$. This is achieved by considering the geometry of quadrics, more precisely by using the flag variety associated with the quadratic vector bundle of the regular quadric bundle. See
 Proposition \ref{prop: regular} and Lemma \ref{lemma: Assumption2}. We then show that the motive of such quadric bundles belongs to a category obtained by gluing tamely ramified Artin-Tate motives, and deduce from the results of Section 3 our first motivic decomposition (Corollary \ref{cor: decomposition}).
 The last part of Section 3 deals with the motivic decomposition in the case
 where the discriminant locus is a normal crossing divisor.

\section*{Acknowledgements}

The authors want to thank Emiliano Ambrosi, Giuseppe Ancona, Daniele Faenzi, Sophie Morel, J\"org Wildeshaus for discussions, ideas and interest during the course of writing of this paper.
 We also heartily thank the referee for his useful comments which helped us to improve our text.
 This work received support from the ANR HQDIAG (contract ANR-21-CE40-0015),
 the French "Investissements d'Avenir" program, project ISITE-BFC (contract ANR-lS-IDEX-OOOB), and the ANR Hodgefun (contract ANR-16-CE40-0011).

\section*{Notations and conventions}\label{sec:notations}

Given an arbitrary group $G$, and a field $K$ of characteristic $0$,
 we will denote by $\Rep_K(G)$ the abelian monoidal
 category of finite dimensional $K$-representations $V$ of $G$ which factor
 through a finite quotient of $G$. When $G$ is a pro-finite group,
 this amounts to the continuity of the action.
 When $G=\pi$ is the \'etale fundamental group of a geometrically pointed scheme $X$,
 or the usual fundamental group of a complex variety, representations in $\Rep_K(\pi)$
 are classically called \emph{Artin representations}.
 In any case, thanks to Maschke's lemma, the category  $\Rep_K(G)$ is semi-simple.

\bigskip

All our schemes are implicitly assumed to be excellent noetherian and finite dimensional.

 Given (such) a scheme $S$,
 we denote by $\DM_c(S,\QQ)$ the triangulated category of constructible rational
 motives over $S$.\footnote{In \cite{CD3}, several models of this category are given:
 Beilinson motives (Def. 15.1.1), Voevodsky's h-motives (Th. 16.1.2), Voevodsky's motivic complexes
 ($S$ geometrically unibranch scheme, Th. 16.1.4), the plus-part of the rational stable homotopy category (Th. 16.2.13 and 5.3.35),
 the $\PP^1$-stable $\AA^1$-derived \'etale category (Th. 16.2.22). All models being equivalent, the reader
 is free to choose his preferred one.}
 We let $\un_S$ be the constant motive over $S$
 (which is also the unit of the monoidal structure).
 We will use \emph{cohomological motives} over $S$:
 given any morphism $f:X \rightarrow S$, we put:
$$
h_S(X)=f_*(\un_X).
$$
We will also heavily rely on the Bondarko's theory of weight structure,
 and especially the canonical (Chow) weight structure on $\DM_c(S,\QQ)$.
We recall these notions in Section \ref{sec:weight}.

Another important property of rational mixed motives from \cite{CD3} that we will
 use is the so-called \emph{continuity property} (see Def. 4.3.2, Prop. 14.3.1 of \emph{op. cit.}).
 We will use it in the following form (see Prop. 4.3.4 of \emph{op. cit.}):
\begin{prop}\label{prop:cont}
Let $(S_i)_{i \in I}$ be a projective system of schemes with affine transition maps,
 and $S$ be its projective limit $S$, assumed to exists in our category of schemes. 
 Then the canonical functor:
$$
2-\ilim_{i \in I} \DM_c(S_i,\QQ) \rightarrow \DM_c(S,\QQ)
$$
is an equivalence of categories.
\end{prop}

We will use two kinds of triangulated realization functors:
\begin{itemize}
\item Let $\ell$ be a prime invertible on $S$.
 We have the $\ell$-adic realization:
$$
\rho_\ell:\DM_c(S,\QQ) \rightarrow \Der^b_c(S_\et,\QQ_\ell)
$$
where the right-hand side is the constructible derived category
 of Ekedahl's $\ell$-adic \'etale sheaves with rational coefficients
 (see \cite[7.2.24]{CD4}).
\item Let $E$ be a characteristic $0$ field given
 with a complex embedding $\sigma:E \rightarrow \CC$.
 Assume $S$ is a finite type $E$-scheme.
 We have the Betti realization:
$$
\rho_B:\DM_c(S,\QQ) \rightarrow \Der_c(S^\an,\QQ)
$$
where the right-hand side is the constructible derived category
 of rational sheaves over the analytical site of $S^\an=S^\sigma(\CC)$.
 This realization is obtained from that of \cite{AyoubBetti} as the following
 composite:
$$
\DM_c(S,\QQ) \simeq \SH_c(S)_{\QQ+}
 \subset \SH_c(S)_\QQ \xrightarrow{\mathrm{Betti}_S \otimes \QQ} \Der_c(S^\an,\QQ)
$$
where:
\begin{itemize}
\item $\SH_c(S)$ is the constructible stable $\AA^1$-homotopy category
 (made of compact spectra over $S$).
\item $\SH_c(S)_{\QQ+}$ is plus-part of the rationalization of $\SH_c(S)$
 (see eg. \cite[16.2.1]{CD3})
\item the first equivalence is given by \cite[Th. 16.2.13]{CD3}.
\item the functor $\mathrm{Betti}_S$ is (the obvious restriction of that) defined in \cite[Def. 2.1]{AyoubBetti}.
\end{itemize}
\end{itemize}
These two realization functors admit a right adjoint and commute with
 the six operations.\footnote{This is proved in the references indicated above.}

\section{Smooth Artin-Tate motives and motivic decompositions}
\subsection{Gluing and weights on Beilinson motives}\label{sec:weight}

\begin{num}
In the theory of rational mixed motives, a crucial property is given
 by the gluing formalism \cite[Sec. 1.4.3]{BBD82}, which is a consequence
 of Morel-Voevodsky's localization theorem \cite[Th. 2.21, p. 114]{MV}. 
 Given a closed immersion $i: Z \hookrightarrow S$
 with complementary open immersion $j: U \hookrightarrow S$,
 one has six functors
\begin{equation}\label{eq:gluing}
\xymatrix@=30pt{
\DM_c(U,\QQ)\ar@<5pt>[r]^{j_!}\ar@<-5pt>[r]_{j_*}
 & \DM_c(S,\QQ)\ar[l]|{j^*} \ar@<5pt>[r]^-{i^*}\ar@<-5pt>[r]_-{i^!}
 & \DM_c(Z,\QQ) \ar[l]|-{i_*}\, ,
}
\end{equation}
satisfying the formalism of \cite[Sec. 1.4.3]{BBD82}.\footnote{This is the so-called localization property of motives,
 extensively studied in \cite[\textsection 2.3]{CD3}.}

This property will be at the heart of our results (see in particular Section \ref{sec:semi-prim_ChowMot}).
 They are the starting point of H\'ebert's and Bondarko's extension of the weight structure on rational
 motives, from the case of perfect fields to that of arbitrary bases.
 Let us recall this theory, from \cite[Thm. 3.3, thm. 3.8 (i)-(ii)]{Heb11}, for future references.
\end{num}
\begin{thm}\label{motweightstr}
For each scheme $S$, there is a canonical weight structure $w$ on the triangulated category $\DM_c(S,\QQ)$
 called the \emph{motivic weight structure}.
 The family of these weight structures indexed by schemes $S$ is uniquely characterized by the following properties. 
\begin{enumerate}[wide, label=(\arabic*)]
\item The objects $\one_{S}(p)[2p]$ belong to the heart $\DM_c(S,\QQ)_{w=0}$ for all integers $p$, whenever $S$ is regular.
\item \label{compglu} For any morphism $f: T \rightarrow S$,
 the functor $f^*$ (resp. $f_*$) preserves negative weights (resp. positive weights).
 When $f$ is separated of finite type, the functor $f^!$ (resp. $f_!$) preserves positive weights (resp. negative weights). 
\end{enumerate}
\end{thm}
We will only use the motivic weight structure in this paper.
 So $w$ will always mean weights for the motivic weight structure.
 Over a base scheme $S$, the heart of this weight structure 
 will be denoted by $\Chow(S)$ and is called the category of \emph{Chow motives over $S$}.

\begin{rem}\label{rem:compglu} \textit{Gluing of (motivic) weights}. 
 Note in particular from point (2) that when $f$ is proper (resp. etale separated of finite type), $f_*$ (resp. $f^*$) 
 preserves weights. Moreover, under the assumption of the paragraph preceding the theorem,
 it follows from the gluing diagram \eqref{eq:gluing} and point (2) that the motivic weight structure on $S$
 "satisfies gluing": a motive $M$ on $S$ is $w$-positive (resp. $w$-negative) if and only if
 $j^*(M)$ and $i^!(M)$ (resp. $j^*(M)$ and $i^*(M)$) are $w$-positive (resp. $w$-negative).
\end{rem}

\begin{rem}\textit{Assumptions on base schemes}.
Note that in \emph{loc. cit.}, it is assumed that schemes are of finite type over an excellent base scheme $B$
 of dimension less than 3. First, this assumption is not used in the proof of the above statement.
 Secondly, the only reason this assumption appeared in motivic homotopy theory is for the proof of the Grothendieck-Verdier
 duality (also called "local duality"), \cite[Th. 4.4.21]{CD3}.\footnote{More precisely, it is used to obtain some resolution
 of singularity statement; see \emph{loc. cit.}} Since then, this result has been generalized to the schemes considered in our
 paper by Cisinski in \cite[\textsection 2.3]{CisMot}. It is used in \cite{Heb11} only in the statement of Corollary 3.9.
\end{rem}
%
%Recall that the objects considered in property $(1)$ belong to an important subcategory of $\DBcM(S)$: 
%\begin{df}
%The category of \emph{Tate motives over} $S$ is the strict, full, $\BQ$-linear triangulated subcategory $DMT(S)$ of $\DBcM(S)$ generated by the objects $\one_{S^0}(p)$, for $p \in \BZ$ and $S^0$ running through the connected components of $S$. 
%\end{df}
%
%In the first section, various enlargements of this subcategory have been considered. We will use the following one:
%\begin{df}
%The category of \emph{smooth Artin-Tate motives of degree 2} over $S$ is the strict, full, $\BQ$-linear triangulated subcategory $DM^{AT,2}_{\sm}(S)$ generated by motives of the form $M(n)$, with $M$ a direct factor of some $h_{S}(X)$, for $X / S$ \'etale of degree $2$, and $n \in \BZ$. 
%\end{df}

\subsection{Motivic decompositions} \label{sec:dec}

In this section, we formulate several forms of \emph{motivic decompositions},
 which are extensions of the classical \emph{Chow-K\"unneth decomposition}. To discuss the latter, let us first recall that for a smooth projective variety $X$ over a field $k$ and a suitable Weil cohomology theory $H^*( \cdot)$, the standard conjecture $C(X)$, the K\"unneth conjecture, states that the K\"unneth components $p_i\in H^*(X\times X,\QQ))$ of the diagonal are algebraic (i.e., they are in the image of the cycle class map). 

Now the {\em nilpotency conjecture} considers the functor
$$
T:\CHM(k)\to\HM(k)
$$
from the category of Chow motives to the category of homological motives and states that if $M$ is a Chow motive and
$M_{\rm hom}$ the corresponding motive modulo homological equivalence (sometimes called the Grothendieck motive), the kernel of the ring homomorphism
$$
\End(M)\to\End(M_{\rm hom})
$$
is a nilpotent ideal. If we take $M = h(X)$, the nilpotency conjecture $N(X)$ states that the kernel of the homomorphism
$$
\Corr^0(X,X)\to\End(H^*(X,\QQ))
$$
from the ring of correspondences of degree zero to the endomorphism ring of $H^*(X)$ is a nilpotent ideal. (Note that if $X$ has pure dimension $d$, the left hand side is the Chow group $\CH^d(X\times X)_{\QQ}$, the right hand side is isomorphic to $H^{2d}(X\times X,\QQ)$ by Poincar\'e duality and the above map is just the cycle class map.)

The nilpotency conjecture has important consequences. First of all, it implies that the functor $T$ is conservative (i.e., it detects isomorphisms), hence essentially injective (nonexistence of {\em phantom motives}). Moreover, Jannsen proved that conjectures $C(X)$ and $N(X)$ imply the Chow--K\"unneth conjecture $CK(X)$, i.e., the K\"unneth projectors  lift to a set of mutually orthogonal projectors $\pi_i\in\Corr^0(X,X)$ ($i=0,\ldots,2d$) such that 
$$
\Delta_X = \sum_{i=0}^{2d} \pi_i
$$
in $\CH_d(X\times X)\otimes\QQ$ and such that $\pi_i$ depends only on the motive modulo homological equivalence. In this case the motive of $X$ admits a {\em Chow--K\"unneth decomposition}
$$
h(X) = \bigoplus_{i=0}^{2d} h^i(X)
$$
with $h^i(X) = (X,\pi_i)$. 

 \medskip
The nilpotency conjecture is known to hold for finite-dimensional motives. This means in particular that isomorphisms between finite-dimensional Chow motives can be detected by passing to the corresponding homological motives. An application is the result of Kimura-Vial stated in the introduction, which is a direct consequence of the conservativity of the homological realisation on finite-dimensional motives. \footnote{If $H^*(X)$ is algebraic there exists a finite set of algebraic cycles $\{Z_i\}_{i\in I}$, $Z_i\in\CH^{p_i}(X)$, whose cycles classes generate the cohomology. These cycles define a morphism of Chow motives 
$\alpha:\oplus_i \BZ(-p_i)\to h(X)$ such that $T(\alpha)$ is an isomorphism, hence $\alpha$ is an isomorphism by conservativity.} 

\medskip 
Let us now switch to the relative setting. In their fundamental work \cite{CH}, Corti and Hanamura defined a category $\CHM(S)$ of relative Chow motives over a quasi--projective base scheme $S$ defined over a field $k$. 
% together with realisation functors
%\begin{equation}\label{eq:CH_realizations}\tag{R1}
%\begin{split}
%\rho_B&:\CHM(S) \rightarrow \Der^b_c(S^\an,\QQ), \\
%\rho_\ell&:\CHM(S) \rightarrow \Der^b_c(S_\et,\QQ_l)
%\end{split}
%\end{equation}
%where $\rho_B$ (resp. $\rho_\ell$) is the Betti (resp. $\ell$-adic)
% realization functor associated to a given place 
%% (resp. a prime $\ell$ invertible in $k$).
%Depending on the field $k$, we can use these two realisations interchangeably,
% 
%
In this setup, one works with proper morphisms $f:X\to S$ with $X$ smooth and quasi--projective over $k$.
 Relative correspondences are elements of $\CH_*(X\times_S X)$ and composition of correspondences is
 defined using refined Gysin homomorphisms. If we let $Q$ denote either $\QQ$ or $\QQ_\ell$, the relative analogue of the nilpotency conjecture states that the kernel of the map
$$
\Corr^0_S(X,X)\to\End\big(Rf_*(Q_X)\big)
$$
is a nilpotent ideal. (If $X$ has dimension $d$ the above map can be identified with the cycle class map $\CH_d(X\times_S X)_{\QQ}\to H_{2d}^{\rm BM}(X\times_S X,\QQ)$ to Borel--Moore homology.) 
 Corti and Hanamura have considered 
a relative analogue of the notion of Chow--K\"unneth decomposition, by defining suitable realization functors ; nowadays, we can identify them with the restriction of either the Betti realization $\rho_B$ associated with a fixed complex embedding of $k$, or the $\ell$-adic realization $\rho_\ell$
 (see our Notations and conventions, page \pageref{sec:notations}) under the canonical fully faithful embedding of Corti-Hanamura's category into constructible motives\footnote{Indeed,
 when $S$ is quasi-projective over a perfect field,
 it is proved in \cite{Jin1} that the weight $0$ part of $\DM_c(S)$ is equivalent to the category
 defined by Corti and Hanamura.}. We will denote either of these realization functors by $\rho$.

Let us first consider the case where $f:X\to S$
 is in addition smooth. According to Deligne's theorem,
 one gets a decomposition in $D^b_c(S,Q)$:
\begin{equation}\tag{D}\label{eq:Deligne}
Rf_*(Q_X)\cong\bigoplus_i R^if_*(Q_X)[-i].
\end{equation}
One usually says that $h_S(X)$ admits a 
 {\em relative Chow--K\"unneth decomposition} 
 (in short CK-decomposition) if there exist relative motives $M_i\in\CHM(S)$ such that $h_S(X) = \bigoplus_i M_i$ and $\rho(M_i) = R f_*(Q_X)[-i]$ for all $i$. We refer the reader to \cite[Chap. 8]{MNP}
 for a thorough discussion over $\CC$.
 The first example of a relative Chow-K\"unneth decomposition was obtained
 for abelian schemes by Deninger and Murre in \cite{DenMur}.

The general case is more complicated and involves the theory of perverse sheaves.
 Recall that this theory exists either in the analytical setting,
 or in the $\ell$-adic \'etale one. So let us denote as above by $Q$
 either the field $\QQ$ or $\QQ_\ell$, intending that one uses sheaves on the associated analytical
 site in the first case, and \'etale $\ell$-adic sheaves in the second.

 When $k$ is an algebraically closed field of characteristic zero or the separable
 closure of a finite field,
 the decomposition theorem of Beilinson-Bernstein-Deligne says that a decomposition
 of the form \eqref{eq:Deligne} still exists provided one
 replaces the canonical t-structure by the perverse one.
 Moreover, each perverse cohomology sheaf $^p Rf_*(Q_X)$
 is semi-simple in the perverse heart of $D^b_c(S)$.
 These two statements can be expressed by the existence
 of a \emph{topological decomposition} in $D^b_c(S,Q)$
\begin{equation}\tag{BBD}\label{eq:BBD}
Rf_*(Q_X) = \bigoplus_{\lambda} j_{\lambda!*}(L_\lambda)
\end{equation}
where there exists an integer $n_\lambda$ such that
 $L_\lambda[n_\lambda]$ is a simple local system
 on a smooth locally closed subscheme 
 $j_\lambda:U_\lambda \subset X$ 
 and $j_{\lambda!*}(L_\lambda)$ denotes the {\em intermediate extension} (or intersection complex) defined in
 \cite{BBD82}.\footnote{\label{fn:pDeligne} Note in particular that one deduces
 isomorphisms
 ${}^pRf_*(Q_X)[-n] \simeq \bigoplus_{\lambda \mid n_\lambda=n} j_{\lambda!*}(L_\lambda)$ and:
\begin{equation}\tag{D'}\label{eq:pDeligne}
{}^pRf_*(Q_X) \simeq \oplus_n {}^pR^nf_*(Q_X)[-n].
\end{equation}
}

We can now give the first general definition of relative motivic decomposition.
\begin{df}\label{df:CH}
Let $k$ be an algebraically closed field of characteristic $0$. 
 Let $f:X \rightarrow S$ be a projective morphism
 of quasi-projective $k$-schemes such that $X$ is smooth over $k$,
 with associated BBD-decomposition \eqref{eq:BBD}.

A \emph{Corti-Hanamura decomposition} (in short: CH-decomposition)
 of the Chow motive $h_S(X)$ is a finite decomposition
$$
h_S(X) \simeq \bigoplus_{\lambda} M_\lambda
$$
in the pseudo-abelian category $\CHM(S)$ such that $\rho(M_{\lambda}) = j_{\lambda!*}(L_\lambda)$. 
\end{df}
When $f$ is smooth, the decomposition \eqref{eq:BBD} is a refinement of the decomposition \eqref{eq:Deligne}.
 So in the smooth case, a CH-decomposition is similarly a refinement of a CK-decomposition.

\begin{ex}
 Such decompositions have been constructed in a few cases,
 particularly in complex algebraic geometry: cf. \cite{GHM, dC-M, Saito-SMS}.
\end{ex}

The specificity of the above definition is to incorporate the singular case as well. The statement of our first motivic decomposition theorem,
 \ref{thm:decomposition1}, together with Bondarko's work \cite{Bon15}
 lead us to the following more general kind of motivic decompositions of pure 
 weight $0$ motives.
\begin{df} \label{df:BCH}
Let $f:X \rightarrow S$ be a proper morphism such that $X$ is regular. A {\em Bondarko-Corti-Hanamura decomposition} (in short: BCH-decomposition)
 of the Chow motive $h_S(X)$ is a finite decomposition
$$
h_S(X) \simeq \bigoplus_\lambda j_{\lambda!*}(M_\lambda)
$$
in $\CHM(S)$ where $j_\lambda:V_\lambda \rightarrow S$ is a locally closed immersion, $V_\lambda$ is regular,
 $M_\lambda$ is a weight $0$ motive over $V_\lambda$ whose $\ell$-adic realisation is a local system up to a shift,
 and $\rho_\ell j_{\lambda!*}(M_\lambda)=j_{\lambda!*}\big(\rho_\ell(M_\lambda)\big)$. 
\end{df}
The interest of this definition is that it does not require a base field,
 and moreover, it makes sense without having to choose
 a prime invertible on $S$; in particular over $\ZZ$. Note also that our formulation implies
 the existence of a decomposition of the form
 \eqref{eq:pDeligne} as in footnote \ref{fn:pDeligne},
 for all primes $\ell$,
 and contains an "independence of $\ell$" result:
 under the existence of a BCH-decomposition,
 the rank of $^pR^nf_*(\QQ_\ell)$ will be independent of $\ell$
 (see also Cor. \ref{cor:independance_ell}).

\begin{num} \textit{Link with Bondarko's version of the motivic conjectures}.
As said before, the definition is partly motvivated by analysing the work of Bondarko \cite{Bon15}.
 In particular, we can conjecturally justify the existence of BCH-decompositions.
 Indeed, the existence of such decompositions would be a consequence
 of the existence of the motivic $t$-structure together with the
 assumption that it is \emph{transversal} (see \cite[Def. 1.4.1]{Bon15}) to the Chow weight structure.\footnote{Note
 also that Bondarko shows that this would be a consequence
 of Beilinson's conjectures (see \emph{op. cit.}, Prop. 4.1.1).
 He also gives an argument to get it as a consequence of
 the standard conjecture D together with Murre's conjectures.
 Note that to get the existence of BCH-decompositions in all
 cases, we need to apply these conjectures  for all residue
 fields of $S$, and all $\ell$-adic realisations! Of course,
 this kind of exercise in juggling between motivic conjectures
 has its limit; we only hope to convince the reader that
 our definition of BCH-decompositions is reasonable.}
 More precisely, to get the BCH-decomposition under the previous
 assumption, one can apply \emph{op. cit.}: 
\begin{itemize}
\item Prop. 1.4.2, point (3) to the weight $0$ motive $h_S(X)$
 to get the analogue of the relative Chow K\"unneth decomposition:
 $h_S(X) \simeq \oplus_i H^i(h_S(X))[-i]$, $H^i$ computed
 for the motivic $t$-structure.
\item Prop. 4.2.3, to get the decomposition for each
 $H^i(h_S(X))$ given it is pure of weight $i$.
\item Rem. 4.2.4, point 2 to get the statement about
 the $\ell$-adic realisation of the BCH-decomposition.
\end{itemize}
Note moreover that Bondarko obtains in his Proposition 4.2.3
 the uniqueness of the set of isomorphism classes of the
 factors of the BCH-decomposition. We have not stated
 this uniqueness property in the above definition. Indeed, to give a 
 proper statement we either need the existence of the heart
 of the motivic t-structure, or conservativity of the $\ell$-adic realisations.\footnote{Note however that in our Theorems A and B below, the involved motives belong to subcategories on which the realisations can be proven to be conservative (Thm. \ref{real}). So, in our cases we \emph{do} get uniqueness of isomorphism classes in the BCH-decompositions.} However,
 we think that establishing BCH-decompositions is a good way to approximate the conjectural motivic t-structure,
 as shown by Deligne's formula \eqref{eq:pDeligne}.
\end{num}

\begin{ex}\label{ex:cellular}
There is at least one easy example where a BCH-decomposition
 exists: this is the case of relative lci cellular morphisms,
 first addressed by Karpenko
 (see \cite[Def. 5.3.1, Cor. 5.3.7]{ADN} for the more general case).\footnote{This is not surprising as one can
 interpret the BCH-decomposition as an attempt to
 find an algebraic analogue of cellular decompositions of differential varieties.}
 Theorem \ref{thm:decomposition1} gives many more examples in the smooth case (see also \ref{ex:BCH-decompositions}),
 and has indeed served as a motivation for the previous definition.
\end{ex}

\subsection{Smooth Artin motives}

It is possible to extend the known results on Artin motives
 over a field to the relative case. Let us start with the definition.
\begin{df}
We define the category of (constructible) smooth Artin motives
 over $S$ as the thick triangulated subcategory of $\DM_c(S,\QQ)$ generated by motives of the form $h_S(X)$ (resp. $h_S(X)(n)$)
 for $X/S$ finite and \'etale.
 We denote this category by $\DMA(S,\QQ)$.
\end{df}
Note that for $X/S$ finite \'etale, $h_S(X)$ coincides with the Voevodsky
 motive $M_S(X)$.

\begin{ex}
Let $k$ be a perfect field.
 Then it follows from \cite[Chap. 5, Rem. 2. after 3.4.1]{FSV}
 (see also \cite{Orgo} for details)
 that there exists a canonical equivalence of triangulated monoidal categories:
$$
\DMA(k,\QQ) \simeq \Der^b\big(\Rep_\QQ(G_k)\big)
$$
where $G_k$ is the absolute Galois group of $k$,
 and $\Rep_\QQ(G_k)$ denotes the continuous representations of $G_k$
 with rational coefficients.
\end{ex}

\begin{num}\label{num:Artin_motives}
Let us recall that one model of $\DM(S,\QQ)$ is obtained as 
 the $\PP^1$-stable $\AA^1$-derived category of the category
 $\Sh(\sm_{S,\et},\QQ)$
 of rational \'etale sheaves on $\sm_S$ (see \cite[16.2.18]{CD3}).

The inclusion $\rho:S_\et \rightarrow \sm_{S,\et}$ of \'etale sites
 induces a fully faithful and exact functor:
$$
\rho_!:\Sh(S_\et,\QQ) \rightarrow \Sh(\sm_{S,\et},\QQ).
$$
Note $\rho_!$ is moreover (symmetric) monoidal.
One deduces a canonical composite functor:
$$
\Der(S_\et,\QQ) \xrightarrow{\rho_!} \Der\big(\Sh(\sm_{S,\et},\QQ)\big)
 \rightarrow \DM(S,\QQ)
$$
the last functor being obtained by projection to the $\AA^1$-localization
 and then taking infinite suspensions.
 We will still denote the latter composite by $\rho_!$.

Then $\rho_!$ is triangulated and monoidal.
 By definition,
 it sends the sheaf represented by a finite \'etale scheme $X/S$
 on $S_\et$ to the same object on $\sm_{S,\et}$, seen as a motivic complex.
 This is just $M_S(X)=h_S(X)$.
\end{num}
\begin{prop}\label{prop:Artin_motives}
Assume $S$ is a regular connected scheme.
 Let $\pi=\pi_1(S_\et)$ be the \'etale
 fundamental group of $S$ associated with some geometric base point.
 Then the functor $\rho_!$ is fully faithful when restricted to the full subcategory
 $\Der^b\big(\Rep_\QQ(\pi)\big)$ and induces an equivalence of 
 triangulated monoidal categories:
$$
\rho_!:\Der^b\big(\Rep_\QQ(\pi)\big) \rightarrow \DMA(S,\QQ).
$$
Moreover, the $\ell$-adic realisation functor restricted
 to $\DMA(S,\QQ)$ lands into the bounded derived category of Artin
 $\ell$-adic Galois representations and the composite functor:
$$
\Der^b\big(\Rep_\QQ(\pi)\big) \xrightarrow{\rho_!} \DMA(S,\QQ)
 \xrightarrow{\rho_\ell} \Der^b\big(\Rep_{\QQ_\ell}(\pi)\big)
$$
is just the extension of scalar functor associated with $\QQ_\ell/\QQ$.
\end{prop}
\begin{proof}
We prove the first assertion: fully faithful nature of the restriction of $\rho_!$.
 Note that the functor $\rho_!$ admits a right adjoint $\rho^*$.
 The functor $\rho^*$ commutes with direct sums: this follows formally as
 $\Der(S_\et,\QQ)$ is compactly generated\footnote{in fact, it is equivalent
 to $\Der(S_\nis,\QQ)$ as $\pi$ is pro-finite; see \cite[Th. 10.5.10]{CD3}},
 and $\rho_!$ sends the compact generators to compact objects.
We have to prove that for all complexes $K, L$ in $\Der^b\big(\Rep_\QQ(\pi)\big)$,
 and say any $n \in \ZZ$ for the next reduction, the map:
$$
\Hom(K,L[n]) \rightarrow \Hom(\rho_!K,\rho_!L[n])=\Hom(K,\rho^*\rho_!L[n])
$$
is an isomorphism.
 Now we use the fact $\Der^b\big(\Rep_\QQ(\pi)\big)$ is generated
 by shifts of sheaves representable by some finite \'etale cover $X/S$.
 So we are reduced to the case $K=\QQ(X)$, $L=\QQ(Y)$ for $X$ and $Y$ \'etale cover of $S$.
 Explicitly, we have to prove that the following map is an isomorphism:
$$
\Hom(\QQ(X),\QQ(Y)[n]) \rightarrow \Hom(h_S(X),h_S(Y)[n]).
$$
 In the monoidal category $\Sh(S_\et,\QQ)$, and therefore in $\Der(S_\et,\QQ)$,
 the sheaf $\QQ(Y)$ is auto-dual. The same result holds for $h_S(X)$ in $\DM(S,\QQ)$
 (as for example $\rho_!$ is monoidal). Applying the formulas
 $\QQ(X) \otimes \QQ(Y)=\QQ(X \times_S Y)$ and $h_S(X) \otimes h_S(Y)=h_S(X \times_S Y)$,
 we are reduced to the case $Y=S$. In other words we have to prove that the 
 canonical map:
\begin{equation}\label{eq:proof_Artin}
H^n(X_\et,\QQ) \rightarrow \Hom(h_S(X),\QQ[n])=H_M^{n,0}(X,\QQ)
\end{equation}
is an isomorphism. Note that $X$ is regular, as it is \'etale over $S$.
 Thus it is geometrically unibranch and we get from \cite[IX, 2.14.1]{SGA4}:
$$
H^n(X_\et,\QQ)=\QQ^{\pi_0(X)} \text{ for } n=0, \ 0 \text{ otherwise.}
$$
Also, according to \cite[14.2.14]{CD3}, we get:
$$
H_M^{n,0}(X,\QQ)=Gr_\gamma^0 K_{-n}(X)_\QQ
=\QQ^{\pi_0(X)} \text{ for } n=0, \ 0 \text{ otherwise,}
$$
and so the map \eqref{eq:proof_Artin} is necessarily an isomorphism.

The other assertions are clear, by definition of the category of smooth Artin motives.
\end{proof}

In the complex case, we get a simpler formulation.
\begin{prop}\label{prop:Artin_motives_complex}
Let $E$ be a field of characteristic zero, $S$ a smooth connected $E$-scheme and $\pi=\pi_1(S^\an)$
 for any choice of base point of $S^\an$.

Then the Betti realisation functor:
$$
\rho_B:\DM_c(S,\QQ) \rightarrow \Der(S^\an,\QQ)
$$
is fully faithful when restricted to $\DMA(S,\QQ)$ and induces an equivalence
 of triangulated monoidal categories:
$$
\rho_B:\DMA(S,\QQ) \rightarrow \Der^b\big(\Rep_\QQ(\pi)\big).
$$
\end{prop}
The proof uses the same argument as in the case of the previous proposition,
 given that $h_S(X)$ is realized to the complex $\derR f_*^\an(\QQ_X)$
 which is concentrated in degree $0$ and equal to the continuous representation
 of $\pi$ represented by the Galois cover $X^\an$ over $S^\an$.

\begin{rem}
The two previous propositions are obviously compatible: in the assumptions of the second one,
 we get according to a theorem of Grothendieck:
$$
\pi_1(S_\et) \simeq \widehat{\pi_1(S^\an)}
$$
where the right hand-side denotes profinite completion.
 In particular, we get equivalence of (abelian semi-simple monoidal) categories:
$$
\Rep_\QQ\big(\pi_1(S_\et)\big) \simeq \Rep_\QQ\left(\widehat{\pi_1(S^\an)}\right)
 \simeq \Rep_\QQ\big(\pi_1(S^\an)\big).
$$
\end{rem}

\subsection{Weights on smooth Artin-Tate motives}

The purpose of this section is to extend results of Wildeshaus,
 \cite{WilAT}
 on Artin-Tate motives over a field to the case of
 a regular base scheme $S$. Moreover,
 for the purpose of our main theorem,
 we will need to restrict the type of allowed Artin motives,
 as in \emph{loc. cit.} So we introduce the next definition.
\begin{df} \label{AT}
Let $S$ be a regular connected scheme
 with \'etale fundamental group $\pi=\pi_1(S_\et)$.
  Let $\cArt$ be a full $\QQ$-linear sub-category of $\Rep_\QQ(\pi)$
 which is stable under retracts.\footnote{And therefore stable under kernel and cokernel as $\Rep_\QQ(\pi)$
 is abelian semi-simple.
 In particular, $\cArt$ is abelian semi-simple.}

We define the triangulated category of smooth Artin-Tate motives
 of type $\cArt$ over $S$
 as the thick triangulated subcategory of $\DM_c(S,\QQ)$
 generated by motives of the form $\rho_!(A)(n)$
 for an object $A$ of $\cArt$ and an integer $n \in \ZZ$
 (see Proposition \ref{prop:Artin_motives} for the definition of $\rho_!$).
 We denote it by $\DMATg(S,\QQ)$.

When $\cArt=\Rep_\QQ(\pi)$,
 the above category is simply the category of smooth Artin-Tate motives
 over $S$, denoted by $\DMAT(S,\QQ)$.

%\mattia{The following should not be needed anymore:} In the followings, we will focus on the special case
% where $\cArt$ is the category of \emph{representations of degree
% at most $2$}, that is associated with an \'etale cover of
% degree at most $2$. We will denote by
% $\DMATtwo(S,\QQ)$ the corresponding category of smooth Artin-Tate
% motives, \emph{called smooth Artin-Tate motives of degree at most $2$}.
\end{df}

\begin{rem} \label{tens}
Note that compared to Definition 1.6 of \cite{WilAT},
 we do not assume that $\cArt$ is closed under tensor product.
 As a consequence $\DMATg(S,\QQ)$ is not monoidal in general.
% \mattia{Also the following:} This is in particular the case of the category
% $\DMATtwo(S,\QQ)$ in which we are interested.
\end{rem}

%\begin{num}
%Based on the previous section,
% we will now extend Theorem 2.5 of \cite{WilAT}, to regular bases.
% As in \emph{loc. cit.}, we will need to restrict the type of allowed Artin motives
% so we introduce the following intermediate definition.
%
%Let $S$ be a regular connected base and $\pi$ be its \'etale fundamental group at an arbitrary geometric point.
% We and $\cArt$ a full $\QQ$-linear sub-category of the category of $\QQ$-linear Artin representations $\Rep_\QQ(\pi)$
% which is stable under retracts (and therefore stable under kernel and cokernel as $\Rep_\QQ(\pi)$ is abelian semi-simple).
% Note that compared to \emph{loc. cit.}, we do not assume that $\cArt$ is closed under tensor product.
%\end{num}
\begin{thm}\label{thm:Artin-Tate}
Consider the above notations and assumptions of the above
 definition.

Then the weight structure on $\DM_c(S,\QQ)$ restricts to a weight
 structure on the triangulated sub-category $\DMATg(S,\QQ)$.
Moreover, any motive $M$ in $\DMATg(S,\QQ)$ of weight $0$
 admits a decomposition:
\begin{equation}\label{eq:decomp_AT}
M \simeq \bigoplus_{i \in I} \rho_!(V_i)(n_i)[2n_i]
\end{equation}
where $I$ is a finite set, $V_i$ is a simple Artin representation
 of $\pi$ in $\cArt$ and $n_i$ is an integer.
 (See Paragraph \ref{num:Artin_motives} for $\rho_!$).
\end{thm}

\begin{rem}\label{rem:thm:Artin-Tate}
\begin{enumerate}
\item Another way of stating the second assertion is that
 the canonical functor
\begin{align*}
\Der^b(\cArt) & \rightarrow
  \DMATg(S,\QQ)_{w=0}=\DMATg(S,\QQ) \cap \Chow(S,\QQ) \\
 (V_n)_{n \in \ZZ} & \mapsto \bigoplus_{n \in \ZZ} \rho_!(V_n)(n)[2n]
\end{align*}
is essentially surjective. Contrary to what happens in \cite{WilAT},
 over a perfect field, this functor is not an equivalence of 
 triangulated categories. The problem comes from the
 non-triviality of $CH^n(V)_\QQ$ for $n>0$ and $V/S$ finite \'etale.
 So in particular,
 the preceding functor is an equivalence when $S$ is (regular) semi-local.
\item The decomposition \eqref{eq:decomp_AT} is a particular
 case of the Corti-Hanamura decomposition: 
 If we apply the $\ell$-adic realization functor $\rho_\ell$
 to an Artin representation $V_n$  we get (according to the last assertion of Prop.
 \ref{prop:Artin_motives})
$$
\rho_\ell(M)
 \simeq \bigoplus_{i \in I} (V_i\otimes_\QQ \QQ_\ell)(n_i)[2n_i].
$$
In particular, if the decomposition \eqref{eq:decomp_AT} of $M$
 is not unique. But the pairs $(V_i,n_i)$ for $i \in I$
 are uniquely determined by $M$ (or its realization).
\end{enumerate}
\end{rem}
\begin{proof}
Let us show the first assertion.
 Let $\mathcal H$ be the full $\QQ$-linear sub-category 
 of $\DMATg(S,\QQ)$ whose objects are of the form \eqref{eq:decomp_AT},
 where $V_i$ is a simple object of $\cArt$.
 As $S$ is regular, all motives in $\mathcal H$ are of weight $0$.
 In particular, given such motives $M$, $N$, one has
 $\Hom(M,N[i])=0$ for $i>0$.
 This condition implies that there is a unique weight structure on
 $\DMATg(S,\QQ)$ (see eg. \cite[1.5]{WilChow}),
 whose heart is the pseudo-abelianization $\mathcal K$ of $\mathcal H$.
 It follows from the axioms of weight structures that this weight structure
 is just the restriction of the Chow weight structure,
 thus proving the first assertion (see \cite[Rem. 4.4]{Wil17}).

To prove the second assertion, it is sufficient to prove that $\mathcal K=\mathcal H$,
 \emph{i.e.} that $\mathcal H$ is pseudo-abelian.
 Let $e:M \rightarrow M$ be an idempotent of $\mathcal H$.
 By assumption, we can write $M=\bigoplus_{i=1}^r \rho_!(V_i)(n_i)[2n_i]$
 where $V_i$ is an object of $\cArt$ (semi-simple, but not necessary simple)
 and $n_1<n_2<...<n_r$.
 We prove by induction on the number of factors $r$ of $M$ that $e$ admits a kernel
 in $\mathcal H$.

We treat the case $r=1$.
 According to Proposition \ref{prop:Artin_motives}, and the fact that twists are invertible,
 the category of motives of the form $\rho_!(V_1)(n_1)[2n_1]$ for an object $V_1$ of $\cArt$
 and an integer $n_1$ is equivalent to the abelian (semi-simple) category $\cArt$.
 This implies that any idempotent of a motive of this form admits a kernel,
 giving the case $r=1$.

Assume the result is known for any integer less than $r>1$,
 and prove the case where $M$ has exactly $r$ factors as above.
 Put $P=\rho_!(V_1)(n_1)[2n_1]$ and $Q=\bigoplus_{i=2}^r \rho_!(V_i)(n_i)[2n_i]$
 so that $M=P \oplus Q$.
 Given that decomposition, we can write the idempotent $e$ as a 2 by 2 matrix:
$$
e=\begin{pmatrix}
 a & b \\ c & d
\end{pmatrix}.
$$
Then the map $b:Q \rightarrow P$ belongs to
$$
\oplus_{i=2}^r \Hom\big(\rho_!(V_i)(n_i)[2n_i],\rho(V_1)(n_1)[2n_1])=\CH^{n_1-n_i}(V_i \times_S V_1)
$$
using the auto-duality of $\rho_!(V)$ (see the proof of Prop. \ref{prop:Artin_motives})
 --- in the right hand-side, we identify the sheaf $V_i$ with the finite \'etale $S$-scheme
 which represents it.
 As by assumption, $n_1<n_i$ for $i>1$, we get that $b=0$.
 From the relation $e^2=e$, one deduces that $a$ and $d$ are idempotents, of $P$ and $Q$
 respectively, and one also gets the following constraint on $d$:
$$
ca+dc=c.
$$
By induction, on deduce that $P$ (resp. $Q$) splits into $\Ker(a) \oplus \Ker(1-a)$
 (resp. $\Ker(d) \oplus \Ker(1-d)$) with respect to the idempotent $a$ (resp. $d$)
 and that the map
 $\Ker(a) \oplus \Ker(d) \rightarrow P \oplus Q$
 given by the matrix
 $\begin{pmatrix} 1 & 0 \\ -c & 1\end{pmatrix}$
 is a kernel of $e$ in $\mathcal H$. This concludes the induction step, and the proof.
\end{proof}

\subsection{First decomposition theorem}

For the needs of the proof of our first decomposition
 theorem, we will extend some definitions of \cite{WilMIC}. Let us first recall
 the following theorem, Th. 1.7 of \emph{loc. cit.}
\begin{thm}[Wildeshaus]
Let $j:U \rightarrow S$ be an open immersion.
 Then the weight-exact functor $j^*$ induces an additive exact functor 
$$
j^*:\Chow(S) \rightarrow \Chow(U)
$$
which is essentially surjective and full.
\end{thm}

\begin{num}
Consider an open immersion $j:U \rightarrow S$ and a Chow motive $M$ over $U$.
 A \emph{Chow extension} of $M$ along $j$ will be a pair $(\bar M,\alpha)$
 where $\bar M$ is a Chow motive over $S$ and $\alpha$ is an isomorphism
 $j^*(\bar M)\xrightarrow{\sim} M$. Morphisms of such extensions are defined in
 the obvious way.

According to the previous theorem, extensions of $M$ along $j$ always exist.
 The goal of the work of Wildeshaus is to find (and to define) the
 \emph{intermediate extension} of $M$ along $j$: see \cite{Wil17}, Summary 2.12.
 For the needs of our first decomposition theorem,
 we will use a special type of Chow extensions that we now introduce.
\end{num}
\begin{df}\label{df:fair_ext}
Consider the above notations, assuming that $j$ is dense.
 Then a Chow extension $(\bar M,\alpha)$ of $M$ along $j$ will be called \emph{fair}
 if the induced map
$$
\End(\bar M) \xrightarrow{} \End\big(j^*(\bar M)\big) \xrightarrow{\alpha_*} \End(M)
$$
is an isomorphism --- \emph{i.e.} a monomorphism according to the previous theorem.
\end{df}

The proof of the following result is identical to that of \cite[Th. 3.1(a)]{WilMIC}.
\begin{thm}\label{thm:fair_ext}
Consider the notations of the previous definition.
 Assume a fair Chow extension $(\bar M,\alpha)$ of $M$ along $j$ exists.

Then for any extension $(P,\beta)$ of $M$ along $j$, there exists a decomposition of $P$ of the form:
$$
\psi:P \xrightarrow{\ \sim\ } \bar M \oplus i_*(L_Z)
$$
where $L_Z$ is a Chow motive over $Z$, satisfying the relation:
 $j^*(\psi)=\alpha^{-1} \circ \beta$.

If moreover $(P,\beta)$ is fair, then $L_Z=0$ and the isomorphism $\psi$ is uniquely determined
 by the preceding relation.
\end{thm}

\begin{ex}\label{ex:fair_ext}
A fair Chow extension as above is a particular instance of Wildeshaus's theory
 of intermediate extension $j_{!*}(M)$, as shown for example by the characterizing property
 (4a) of Summary 2.12 of \cite{Wil17}. The preceding theorem can also be interpreted as a
 minimality property of the extension $(\bar M,\alpha)$.

 One cannot always expect that such minimal extensions exist (the correct hope is formulated in \emph{loc. cit.}, Conjecture 3.4; cfr. Rmk. \ref{conj_semiprim}). However, here are some interesting examples.
\begin{enumerate}
\item Assume that $U$ is regular, and the normalization of $X$ is regular.
 Then $\un_U$ admits a fair Chow extension: this is \cite[Th. 3.11(a)]{WilMIC}.
\item Assume that both $U$ and $S$ are regular.
 Let $V/U$ be an \'etale cover. Using a classical terminology,
 we will say that $V$ is \emph{non-ramified along $(S-U)$} if there exists an \'etale cover
 $\bar V/S$ whose restriction to $U$ is $V$.
 In that case, for any integer $n \in \ZZ$,
 the Artin-Tate motive $\bar M=h_S(\bar V)(n)[2n]$ is a fair Chow extension of $M=h_U(V)(n)[2n]$
 along $j$.

Indeed, $\bar M$ is obviously a Chow extension of $M$ along $j$ (using the six functors formalism).
 Moreover, as in the proof of Prop. \ref{prop:Artin_motives}, one gets:
$$
\End(\bar M) \simeq \QQ^{\pi_0(\bar V \times_S \bar V)}, \ \End(M) \simeq \QQ^{\pi_0(V \times_U V)}.
$$
As $V \times_U V$ is a dense open of $\bar V \times_S \bar V$, we get that $\bar M$ is fair
 as claimed.
\end{enumerate}
\end{ex}

\begin{rem}
The reader can check that one can replace, in \cite[Th. 3.11]{WilMIC} points (a) and (b),
 the constant motive $\un_U$ by a smooth Artin-Tate motive $h_U(V)$
 such that $V$ is unramified along $(S-U)$.
\end{rem}

\begin{num}
Recall also that a Chow motive over a field $k$ is said to be \emph{Tate}
 if it is isomorphic to a finite sum of motives of the form $\un(i)[2i]$
 for an integer $i \in \ZZ$. By extension, a smooth proper $k$-scheme
 is said to be Tate if its associated Chow motive is Tate.
\end{num}
\begin{thm}\label{thm:decomposition1}
Let $S$ be a regular connected scheme with \'etale fundamental group $\pi$.
Let $f:X \rightarrow S$ be a smooth proper morphism whose geometric fibers are Tate.

Then there exists an isomorphisms of motives over $S$:
$$
h_S(X) \simeq \bigoplus_{i \in I} \rho_!(V_i)(-n_i)[-2n_i]
$$
where $I$ is a finite set, $V_i$ is a simple Artin representation of $\pi$ and $n_i$ is a non-negative integer.
 In other words, $h_S(X)$ is a smooth Artin-Tate motive over $S$ of weight $0$ --- see Theorem \ref{thm:Artin-Tate}.

Moreover, this decomposition is a BCH-decomposition of the Chow motive $h_S(X)$ (see Def. \ref{df:BCH}), and the set $\big\{\big([V_i],n_i\big), i \in I \big\}$ where $[V_i]$ denotes the isomorphism class in $\Rep_\QQ(\pi)$
 (or what amount to the same in the corresponding category of simple \'etale $S$-covers) is uniquely
 determined by the property that for any prime integer $\ell$ and any $n \geq 0$:
\begin{equation}\label{eq:thm:decomposition_ell}
\derR^{2n} f'_*(\QQ_\ell) \simeq \bigoplus_{i \in I \mid n_i=n} V'_i \otimes_\QQ \QQ_\ell(-n_i)
\end{equation}
where $f'$ and $V'_i$ are pullback of $f$ and $V_i$ along the open immersion
 $U[\ell^{-1}] \rightarrow U$.
\end{thm}
\begin{proof}
Let $\bar \eta$ be a geometric generic point of $S$.
 By assumption, one gets:
$$
h_{\bar \eta}(X_{\bar \eta}) \simeq \sum_{i \in I} \un_{\bar \eta}(-n_i)[-2n_i].
$$
According to the continuity property of $\DM_c$ (see Proposition \ref{prop:cont}),
 there exists a dense open $j:U \rightarrow S$ and an \'etale cover $p:V \rightarrow U$ such that
 the above isomorphism lifts to:
$$
h_V(X_V) \simeq \bigoplus_{j \in J} \un_V(-m_j)[-2m_j].
$$
In particular, $h_U(X_V)=p_*(h_V(X_V)) \simeq \sum_{j \in J} p_*(\un_V)(-m_j)[-2m_j]$
 is a smooth Artin-Tate motive over $U$.
 As $p$ is finite \'etale, the natural map
$$
p^*:h_U(X_U) \rightarrow h_U(X_V)
$$
is a split monomorphism, with splitting $\frac 1 d.p_*$ where $p_*$ is the Gysin morphism
 associated with $p$ (see e.g. \cite[13.7.4 and 13.7.6]{CD3}). In particular,
 $h_U(X_U)$ is a smooth Artin-Tate motive, and it follows from Theorem \ref{thm:Artin-Tate}
 that there exists a decomposition:
$$
h_U(X_U) \simeq \bigoplus_{i \in I} \rho_!(W_i)(-n_i)[-2n_i]
$$
where $W_i$ is a simple Artin representation of $\pi_1(U)$.
 Given a prime $\ell$, we let  $U'=U[\ell^{-1}]$, $f_{U'}$ the pullback of $f$ over $U'$,
 $W'_i$ the Artin representation of $\pi_1(U')$ induced by $W_i$.
 According to Remark \ref{rem:thm:Artin-Tate}(2), and because
 $\rho_\ell(h_{U'}(X_{U'})) \simeq \derR f_{U'*}(\QQ_\ell)$, one gets:
\begin{equation}\label{eq:decomposition1a}
\derR^{2n} f_{U'*}(\QQ_\ell) \simeq \bigoplus_{i \in I \mid n_i=n} W'_i \otimes_\QQ \QQ_\ell(-n_i).
\end{equation}
This is the decomposition of the locally constant sheaf $\derR^{2n} f_{U'*}(\QQ_\ell)$ into
 semi-simple components (beware the $W'_i$ might not be simple).
 As this sheaf admits an extension to all $S$, namely $\derR^{2n} f'_{*}(\QQ_\ell)$, $f'=f[\ell^{-1}]$,
 it follows that each representation $W'_i$ is unramified along $(S-U)$.
 As this is true for any prime $\ell$, one deduces that $W_i$ is unramified along $(S-U)$;
 \emph{i.e.} it admits an extension $V_i$ to $S$.
 According to Example \ref{ex:fair_ext}(2), one deduces that
$$
\bigoplus_{i \in I} \rho_!(V_i)(-n_i)[-2n_i]
$$
is a fair Chow extension of $h_U(X_U)$. As $h_S(X)$ is obviously a Chow extension of $h_U(X_U)$, one deduces
 from Theorem \ref{thm:fair_ext} that there exists a decomposition:
\begin{equation}\label{eq:decomposition1b}
h_S(X) \simeq \bigoplus_{i \in I} \rho_!(V_i)(-n_i)[-2n_i] \oplus i_*(M_Z)
\end{equation}
for some Chow motive $M_Z$ over $Z$. 
 Note that Relation \eqref{eq:decomposition1a} implies Relation \eqref{eq:thm:decomposition_ell}.\footnote{In particular,
 we know that $\rho_\ell(M_Z)$ vanishes but this is not sufficient to conclude
 (as we do not know yet that the $\ell$-realization is conservative).} 

It remains to prove that $M_Z=0$.
 As the morphism
 $p:S'=\sum_{\ell\text{ prime}} S[\ell^{-1}] \rightarrow S$
 is a pro-open cover, the pullback functor
 $p^*:\DM(S) \rightarrow \DM(S')$ is conservative
 (use the continuity property of $\DM$ \cite[Th. 14.3.1]{CD3} 
 and the Zariski separation property
 as in the proof of \cite[Prop. 4.3.9]{CD3}).
 In particular, we can fix a prime $\ell$ and work over $S[\ell^{-1}]$.
 To simplify notation,
 let us assume $S=S[\ell^{-1}]$.
 Consider a point $x \in Z$, and let $i_x:\{x\} \rightarrow Z$ be the canonical immersion.
 Let $\bar x$ be a geometric point over $x$. By assumption, $h_{\bar x}(X_{\bar x})$ is a Tate motive.
 In other words, the motive $i_x^*h_S(X)=h_x(X_x)$ is an Artin-Tate motive over the residue field $\kappa(x)$.
 Decomposition \eqref{eq:decomposition1b} gives:
$$
h_x(X_x) \simeq \bigoplus_{i \in I} \rho_!(V_{i,x})(-n_i)[-2n_i] \oplus i_x^*(M_Z)
$$
where $V_{i,x}$ denotes the pullback of $V_i$ to $x$ (seen as an \'etale sheaf).
 As recalled in Remark \ref{rem:thm:Artin-Tate}(2), applied over $\kappa(x)$,
 the $\ell$-adic realization of $h_x(X_x)$ determines the factors of the decomposition of $h_x(X_x)$
 into twists of Artin motives (up to isomorphisms and permutations).
 Thus relation \eqref{eq:thm:decomposition_ell} (which we have already established), specialized at $x$ using the smooth base change theorem
 in $\ell$-adic \'etale cohomology, implies that the set of isomorphism classes of the $V_{i,x}$
 describes all the possible factors of the weight $0$ Artin-Tate motive $h_x(X_x)$ and this implies $i_x^*(M_Z)=0$.
One concludes as $(i_x^*)_{x \in Z}$ is a conservative family of functors on $\DM_c(Z,\QQ)$
 (see \cite[4.3.17]{CD3}).  
\end{proof}

\begin{rem}
Note also that we deduce from Example \ref{ex:fair_ext} that for any dense open immersion $j:U \rightarrow S$,
 $h_S(X)$ is a fair Chow extension of $h_U(X_U)$. In particular, $h_S(X)=j_{!*}(h_U(X_U)$.
\end{rem}

\begin{ex}\label{ex:BCH-decompositions}
Let $S$ be a regular connected scheme. 
 The previous theorem applies in the following situations:
\begin{enumerate}
\item Relative Severi-Brauer $S$-schemes: a proper smooth morphism
 $f:X \rightarrow S$
 whose geometric fibers are isomorphic to $\PP^r$;
 see \cite[Section 2]{Bernardara} for more discussion.
\item \'Etale-local cellular $S$-schemes: we have already met relative lci
 cellular schemes (Example \ref{ex:cellular}). The previous theorem
 applies more generally to morphisms $f:X \rightarrow S$ such that
 there exists an \'etale cover $p:X' \rightarrow X$ such that $f \circ p$
 is a relative lci cellular scheme in the sense of \emph{loc. cit.}
\end{enumerate}
Point (2) contains in particular the cases of smooth families of projective quadrics,
 and smooth families of projective homogeneous varieties.
\end{ex}

\begin{cor}\label{cor:independance_ell}
Under the assumption of the previous theorem,
 for any integer $n \geq 0$,  the integer:
$$
\dim_{\QQ_\ell}\big(\derR^n f_*(\QQ_\ell)\big)
$$
is independent of the prime $\ell$ invertible on $S$.
\end{cor}

\begin{num}\label{num:clifford_aglebra&Prym}
An important corollary for us is the case of smooth quadrics:
 following \cite[XII, Def. 2.4]{SGA7}, a smooth quadric
 over a scheme $S$ is a smooth proper morphism $f:X \rightarrow S$
 whose geometric fibers are smooth quadric hypersurfaces in the
 classical sense.

Let us introduce some notations in order to state
 the next corollary.
Assume $f$ has constant relative dimension $n$.
 According to \emph{op. cit.}, 2.6, there exists a Severi Brauer 
 $S$-scheme $P(X)$ which contains $X$ as an effective Cartier
 divisor of degree $2$. If $n=2m$, the center of the Clifford
 $\mathcal O_S$-algebra associated with the closed pair $\big(P(X),X\big)$
 is an \'etale cover $Z(X)$ over $S$ of degree $2$; \emph{op. cit.}, 2.7.

Then, applying Theorem \ref{thm:decomposition1} and \cite[Th. 3.3]{SGA7}
 one gets:
\end{num}
\begin{cor}\label{cor:sm_quadrics}
Consider the above notations. Assume that $S$ is regular
 and that there exists a prime $\ell$ invertible on $S$.

Then there exists a decomposition of Chow motives over $S$ as follows:
$$
h_S(X) \simeq
\begin{cases}
\bigoplus_{i=0}^n \un_S(i)[2i] & \text{if } n=2m+1, \\
h_S\big(Z(X)\big) \oplus \bigoplus_{i=0, i \neq m}^n \un_S(i)[2i] & \text{if } n=2m.
\end{cases}
$$
\end{cor}

\begin{ex} \label{arith_smprquad}
Let us give some concrete examples illustrating the above Corollary when $S$ is an arithmetic scheme.
\begin{enumerate}[wide, labelindent=0pt, label=(\arabic*)]
\item It is known that smooth quadrics over $\Spec \BZ$ exist: one way to see this has been explained by Buzzard in \cite{9605}. One considers a quadric hypersurface $X$ in $\BP^N_{\BQ}$ whose Hessian matrix $H$ is a symmetric, integer valued $(N+1) \times (N+1)$ matrix with $\det H= \pm 1$ and even entries down the diagonal; e.g., the Gram matrix of any even unimodular lattice (these exist in any positive dimension divisible by 8) provides such an $H$. As $\det H$ is non-zero mod $p$ for any prime $p$, the quadric $X$ admits a proper and smooth model over $\BZ$. 
\item Since $\Spec \BZ$ is simply connected (a consequence of Minkowski's theorem), the previous construction yields quadrics whose motive decomposes, according to Cor. \ref{cor:sm_quadrics}, as a sum of Tate motives (the Artin factors are necessarily trivial). In order to find cases where genuine Artin-Tate motives appear, we have to place ourselves over a non-simply connected base. Let then $F$ be a number field with ring of integers $\mathcal{O}_F$, such that $S:= \Spec{\mathcal{O}_F}$ has non-trivial \'etale fundamental group. In this setting, examples abound, as we are going to argue following a line of thought suggested to us by G. Ancona. 

Let $N$ be a positive integer. Analogously to Example (1) above, we look to a $\mathcal{O}_F$-valued $2N \times 2N$ matrix $H$, such that any entry on the diagonal is divisible by 2, and $\det H$ is invertible in $\mathcal{O}_F$. We may take for example $N=2$, $\mathcal{O}_F=\BZ[\sqrt -5]$ (we have then $\pi_{1,\et}(S) \simeq \BZ / 2 \BZ$) and 
\begin{equation}\label{exmat}
H=\left(
\begin{array}{cc}
A & 0 \\
0 & B
\end{array}
\right), 
A=\left(
\begin{array}{cc}
-4\sqrt -5 & 9 \\
9 & 4\sqrt -5 
\end{array}
\right), 
B=\left(
\begin{array}{cc}
2 & \sqrt -5 \\
\sqrt -5 & -2
\end{array}
\right) 
\end{equation}

The matrix $H$ is then the Hessian matrix of a smooth quadric $X$ in $\BP^{2N-1}_{S}$. We want to exhibit examples where the motive $h_S(Z(X))$ of Cor. \ref{cor:sm_quadrics} is non-Tate. By proper base change and Thm. \ref{prop:Artin_motives}, it is enough to make sure that there exists a prime $\mathfrak{p}$ of $\mathcal{O}_F$ such that the Galois representation on $H_{\et}^{N-1}(X_{\mathfrak{p}}, \BQ_{\ell})$, for $\ell \neq p:= \car \mathcal{O}_F/\mathfrak{p}$, is non-trivial. The Lefschetz fixed point formula shows that the latter property is equivalent to the condition
$$
\mathrm{Card}\big(X_{\mathfrak{p}}(\BF_q)\big) \neq 1 + q +...+2q^{N-1}+...q^{2N-2},
$$
where $\BF_q$ denotes the field $\mathcal{O}_F/\mathfrak{p}$ and $q=p^r$ for some $r$. This can be checked in specific cases,
 using the following elementary fact: a quadratic form in at least three variables over a finite field always admits an isotropic vector,
 hence always contains an hyperbolic plane.

We make explicit this principle when $N=2$. Let $m$ be the quadratic form in four variables over $\BF_q$ obtained by reducing the coefficients of $H$ modulo $\mathfrak{p}$.
 By the fact just recalled, $m$ can be written in the form $ut=rx^2+sy^2$, with $r,s \in \BF^{\times}_q$.
 Now a computation shows that if $rx^2+sy^2$ has no isotropic vectors, then $\mathrm{Card}\big(X_{\mathfrak{p}}(\BF_q)\big)=1+q^2$,
 thus giving us the desired quadric. For example, if we choose $A$ and $B$ as in \eqref{exmat}
 and suppose that $p \neq 2$, we have $r=s=1$,
 and the fact that the bilinear form $x^2+y^2$ has no isotropic vectors is equivalent to ask that $1=\disc(x^2+y^2) \neq -1 \mod (\BF^{\times}_q)^2$,
 which is equivalent to ask $-1$ not being a square in $\BF_q$.
 Now choose as $\mathfrak p$ one of the two prime ideals above the completely split prime $p=3$. Then, $\BF_q = \BZ/3\BZ$, and $-1$ is not a square mod 3. 

\end{enumerate}
\end{ex}

\section{Recall and complement on Wildeshaus's motivic intermediate extensions}

\subsection{Semi-primary categories and motivic intermediate extensions.} 
Recall the following definition. 

\begin{df} \label{semipr}
A $\BQ$-linear category $\FC$ is \emph{semi-primary} if 
\begin{enumerate}[wide, labelindent=0pt, label=(\arabic*)]
\item for all objects $B$ of $\FC$, the \emph{radical}
\[
\rad_{\FC}(B,B) := \{ f \in \Hom_{\FC}(B,B) \vert \forall \ g \in \Hom_{\FC}(B,B), \id_B-gf \ \mbox{is invertible} \} 
\]
is nilpotent;
\item the quotient category $\FC / \rad_{\FC}$ is semisimple. 
\end{enumerate}
\end{df}

Adopt the notations of subsection \ref{sec:weight}. We are now going to explain how the notion of semi-primality leads to a definition of an intermediate extension functor, following Wildeshaus. 

Let $\cdot$ denote any of the schemes $U$, $S$ or $Z$, and fix $\mathcal{C}(\cdot)$ full pseudo-abelian subcategories of the categories $\DM_c(\cdot, \BQ)$, related by gluing. Assume that they inherit a weight structure (automatically compatible with the gluing) from the restrictions of the motivic weight structure. The subscript $w=0$ will mean that we are taking the \emph{heart} of such weight structures. 

Moreover, denote by $\mathcal{C}(S)_{w=0}^u$ the quotient of the category $\mathcal{C}(S)_{w=0}$ by the two-sided ideal\footnote{Note that in our setting, this ideal will always be contained in $\rad_{\mathcal{C}(S)_{w=0}}$ (\cite[Cor. 1.5 (a)]{Wil17}).} $\Fg$ generated by $\Hom_{\mathcal{C}(S)}(A, i_*B)$ and $\Hom_{\mathcal{C}(S)}(i_*B, A)$, with $(A,B)$ varying on the collection of objects of $\mathcal{C}(S)_{w=0} \times \mathcal{C}(Z)_{w=0}$ such that $A$ admits no non-zero direct factor belonging to $\mathcal{C}(Z)_{w=0}$. Finally, denote by $\mathcal{C}(Z)_{w=0}^u$ the quotient of the category $\mathcal{C}(Z)_{w=0}$ by the restriction of $\Fg$ to $\mathcal{C}(Z)_{w=0}$ (with respect to the fully faithful inclusion $i_* : \mathcal{C}(Z)_{w=0} \hookrightarrow \mathcal{C}(S)_{w=0}$).

\begin{thm}{\cite[Theorem 2.9]{Wil17}} \label{semiprim}
\begin{enumerate}[wide, labelindent=0pt, label=(\arabic*)]
\item If $\mathcal{C}(Z)_{w=0}$ is semi-primary, then the functors $j^*$ and $i_*$ induce a canonical equivalence of categories
\begin{equation} \label{eqsum}
\mathcal{C}(S)_{w=0}^u \simeq \mathcal{C}(U)_{w=0} \times \mathcal{C}(Z)_{w=0}^u
\end{equation}
\item \label{glue} If both $\mathcal{C}(Z)_{w=0}$ and $\mathcal{C}(U)_{w=0}$ are semi-primary, then so is $\mathcal{C}(S)_{w=0}$.
\end{enumerate}
\end{thm}

\begin{df} \label{intext}
Suppose that $\mathcal{C}(Z)_{w=0}$ is semi-primary. The \emph{intermediate extension} is the fully faithful functor 
\begin{equation*}
j_{!*}: \mathcal{C}(U)_{w=0} \hookrightarrow \mathcal{C}(S)_{w=0}^u
\end{equation*}
corresponding to the functor $(\id_{\mathcal{C}(U)_{w=0}}, 0)$ under the equivalence of categories \eqref{eqsum}. 
\end{df}

It follows from its very definition that the motivic intermediate extension functor $j_{!*}$ enjoys the following property (\cite[Summary 2.12 (b)]{Wil17}): 

\begin{prop} \label{scind}
Consider the notation and the assumptions of the previous definition. Then, any object $M$ of $\mathcal{C}(S)_{w=0}$ is isomorphic to a direct sum $j_{!*}M_U \oplus i_*N$, for an object $M_U$ of $\mathcal{C}(U)_{w=0}$ and an object $N$ of $\mathcal{C}(Z)_{w=0}$. The object $M_U$ is such that $j^*M \simeq M_U$ (hence unique up to unique isomorphism) and $N$ is unique up to an isomorphism, which becomes unique in $\mathcal{C}(Z)^u_{w=0}$. 
\end{prop}

\begin{rem} \label{conj_semiprim}
It is believed that for any $S$, the heart $\Chow(S)$ of the motivic weight structure on $\DM_c(S,\BQ)$ is semi-primary (cfr. \cite[Conj. 3.4]{Wil17}). This conjecture is at the moment completely out of reach, but it would permit, by choosing as $\mathcal{C}(\cdot)$ the whole of the categories $\DM_c(\cdot,\BQ)$, to define (up to non-unique isomorphism) the intermediate extension to $S$ of any Chow motive on $U$. When $S$ is the spectrum of a field, the reader should compare this conjecture with the nilpotency conjecture stated at the beginning of Section \ref{sec:dec}. 
\end{rem}

\subsection{Semi-primary categories of Chow motives.}\label{sec:semi-prim_ChowMot}

The aim of this section is to single out some subcategories of Chow motives which can actually be shown to be semi-primary and which will be suited for our geometric applications. We will adapt Wildeshaus' methods from \cite{Wil17}, in order to show semiprimality of subcategories which are different\footnote{See Rmk. \ref{diff} for a comment on these differences.} from the ones considered in \emph{op. cit.}. 

Fix a scheme $S$ admitting a \emph{good stratification} $\FS$, i.e. such that $S$ may be written as a finite (set-theoretic) disjoint union $\bigsqcup_{\sigma \in \FS} S_{\sigma}$ of locally closed subschemes such that the closure $\overline{S_{\sigma}}$ of each stratum $S_{\sigma}$ is a union of strata. 
%More precisely, we will always suppose a good stratification to be of the form $S= \CU \sqcup \bigsqcup_{\sigma \in \Phi_{\CZ}} \CS_{\phi}$, with $j_{\CU} : \CU \hookrightarrow \CS$ open dense, $\Phi_{\CZ}$ a good stratification of $\CZ:=\CS \setminus \CU$. 
%We will then write $\CS=\CS(\Phi)$.
%$\CZ=\CZ(\Phi_{\CZ})$. 

We make the following assumption on our good stratification: 
\begin{assumption} \label{assreg}
For all $\sigma \in \FS$, the strata $S_{\phi}$ are nilregular\footnote{I.e., the underlying reduced scheme is regular.}, with nilregular closure.
\end{assumption}

Fix a stratum $S_{\sigma}$ and consider the categories $\DMATg(S_{\sigma}, \BQ)$ introduced in Definition \ref{AT}.  %Abstractly it is always possible to get that, by applying Zariski's theorem separately to each connected (thus irreducible as they are regular) components of $X$. Moreover, if we have a non irreducible compactification $X$, such that $X_{S_\sigma}$ is dense in $X$, we can always replace $X$ by the disjoint sum of its connected component.  One must only check that the condition below is still satisfied in our example, but I believe it is ok.  Note that this trick already allows to prevent the situation indicated by the referee to happen !

\begin{df} \label{wr}
The category of \emph{tamely ramified} smooth Artin-Tate motives over $S_{\sigma}$ is the category $\DMAwT(S_{\sigma}, \BQ)$ obtained by choosing $\CA=\mbox{A}_{\mbox{\tiny{tr}}}$, where $\mbox{A}_{\mbox{\tiny{tr}}}$ is the full subcategory of direct factors of objects corresponding to finite \'etale morphisms $q:X_{S_{\sigma}} \rightarrow S_{\sigma}$ which are \emph{tamely ramified in codimension 1} (\cite[58.31, tag 0BSE]{SP})).
%verifying the following condition (cfr. the terminology of \ref{ex:fair_ext} (2)): 
%\[
%\mbox{either} \ X_{S_{\sigma}} \ \mbox{is non-ramified along} \ Z_{\sigma}:= \overline{S_{\sigma}} \setminus S_{\sigma}, 
%\]
%\[
%\mbox{or there exists a} \ q^\prime \ \mbox{as above, inducing} \ (X_{Z_{\sigma}})_{\red} \simeq (Z_{\sigma})_{\red}.
%\] 
\end{df}

\begin{rem}
\label{rem: tr}
In the previous definition, suppose that $S_{\sigma}$ is a scheme of characteristic zero. Then any finite étale morphism with target $S_{\sigma}$ is tamely ramified in codimension 1, and the category $\DMAwT(S_{\sigma}, \BQ)$ equals $\DMAT(S_{\sigma}, \BQ)$.
\end{rem}

\begin{thm}\label{constmot}
Let $S$ be a scheme with a good stratification $\FS$ satisfying Assumption \ref{assreg}. Then:
\begin{enumerate}[wide, labelindent=0pt, label=(\arabic*)]
\item the categories $\DMAwT(S_{\sigma},\BQ)$ of tamely ramified smooth Artin-Tate motives over $S_{\sigma}$, $\sigma \in \FS$, can be glued to give a full, triangulated sub-category $\DMATS$ of $\DM_c(S, \BQ)$, called the category of \emph{$\FS$-constructible tamely ramified Artin-Tate motives} over $S$. This subcategory is stable under formation of direct factors. 
\item Let $M \in \DM_c(S,\BQ)$. Then the following conditions are equivalent.
\begin{enumerate}[wide, labelindent=0pt]
\item $M \in \DMATS$.
\item $j^*M \in \DMAwT(S_{\sigma},\BQ)$ for all $\sigma \in \FS$, where $j$ denotes the immersion $S_{\sigma} \hookrightarrow S$.
\item $j^!M \in \DMAwT(S_{\sigma},\BQ)$ for all $\sigma \in \FS$.
\end{enumerate}
In particular, the triangulated category $\DMAT(S,\BQ)$ of smooth Artin-Tate motives over $S$ is contained in $\DMATS$.
%\item The category $DM^{AT,2}_{\sm, \Phi}(\CS(\Phi))$ is the strict, full, dense, $\BQ$-linear triangulated subcategory of $\DBcM(\CS(\Phi))$ generated by the $\bar{j}_* M(p)$ for $\phi \in \Phi$, where $M$ varies among the direct factors of motives $h_{S_{\phi}}(X/S_{\phi})$ for some $X$ \'etale of degree 2 over $S_{\phi}$, $p \in \BZ$, and $\bar{j}$ denotes the closed immersion $\overline{\CS_{\phi}} \hookrightarrow \CS(\Phi)$. 
\item The category $\DMATS$ is pseudo-abelian. 
\end{enumerate} 
\end{thm}
\begin{proof}
One proceeds by induction on the number of strata. If there is only one stratum, all claims are trivial, except for the last claim of part (1) and for
%the density of $DM^{AT,2}_{\sm, \Phi}(\CS(\Phi))$ in $\DBcM(\CS(\Phi))$ (part (3)) and for 
part (3). Of course, it suffices to prove part (3). But in the case we are treating, the category in question is just $\DMAT(S, \BQ)$, and it has been shown in the proof of Thm. \ref{thm:Artin-Tate} that this category has a (bounded) weight structure whose heart is pseudo-abelian. Hence (by \cite[Lemma 5.2.1]{Bon10}), the category $\DMAT(S, \BQ)$ itself is pseudo-abelian. 

As for the induction step, we have that the theorem is true for the complement $Z$ of any open stratum $U$, with its stratification $\FS_{Z}$, by the induction hypothesis. Write $j_{U}$, resp. $i_{Z}$, for the open, resp. locally closed immersion of $U$, resp. $Z$, in $S$.

In order to prove (1), we will prove that the criterion given by \cite[Prop. 4.1. (a)]{Wil17} is satisfied; it says that $\DMAwT(U, \BQ)$ and $\DMATSZ$ can be glued if and only if for all objects $M \in \DMAwT(U, \BQ)$, $i_{Z}^* j_{U,*} M$ belongs to $\DMATSZ$. 
%In fact, observe first that since the theorem holds for $Z$, the motive $i^*_{\CZ} M(p)$ belongs to  $DM^{AT,2}_{\sm,\Phi^\prime}(\CZ(\Phi^{\prime}))$ for any direct factor $M$ of any motive of the form $h_{\CZ}(X_{\CZ} / \CZ)$, $X_{\CZ} /\CZ$ \'etale of degree 2, and any $p \in \BZ$. Moreover, since our stratification satisfies Assumption \ref{assreg}, we see that by proper base change and absolute purity, the same holds for any motive of the form $i^!_{\CZ} M(p)$. 
For this, we can suppose that the closure of $U$ in $S$ is the whole of $S$, that $S$ is connected, and that $S$ and $Z$ are regular
 (and not just nilregular, cfr. Assumption \ref{assreg}). Then, we take a direct factor $M_{U}$ of some motive $h_{U}(X_{U} / U)$,
 with $q:X_{U} \rightarrow U$ finite \'etale and tamely ramified in codimension 1. Suppose now that $Z$ is of codimension 1 in $S$. Then by \cite[Lem. 58.31.5, tag 0EYH]{SP}, the normalization $q ^\prime :X \rightarrow S$ of $S$ in $X_U$ is a finite morphism such that the restriction of $X$ to $U$ is isomorphic to $X_U$ and such that $Z^\prime:=((q^{\prime})^{-1}(Z))_{\red}$ is an effective Cartier divisor which is a regular scheme. 
%By applying Zariski's main theorem, $q$ extends to a finite morphism $q^\prime:X \rightarrow S$ such that $X$ is a disjoint union of its irreducible components. We will treat only the case in which $q$ ramifies along $Z$ (the case in which it extends to an \'etale morphism towards $S$ being easier). We may then suppose:
%\begin{enumerate}[label=(\alph*)]
%\item that $Z$ has codimension 1 in $S$ (by purity of the branch locus\fred{, see (\cite[Lem. 53.20.4, tag 0BMB]{SP})});
%\item that $X_U$ is irreducible (by additivity).
%%\fred{This arguments works if $X_U/U$ is a cover of degree $2$, but in general, an \'etale cover of degree $n+m$ can be a disjoint sum of 
%% \'etale covers of degree $n$ and $m$.
%% However, with the assumption on the chosen compactification made in my comment just before the definition,
%% one can simply use additivity to conclude that one can assume $X_U$ is irreducible.}
%\end{enumerate}
%Now $q^\prime$ is a dominant morphism between integral schemes, and by
% \cite[Lem. 31.13.13, tag 02OO]{SP},
% point (2), $X_Z$ is an effective Cartier divisor in $X$. In particular, $(X_Z)_{\red}$ is a regular effective Cartier
%divisor, since it isomorphic to $Z$, which is regular. 
Now $X_U$ is regular by assumption. Moreover, any point of $Z^\prime$, being regular on an effective Cartier divisor, is regular as a point of $X$ (\cite[Lem. 10.106.7, tag 00NU]{SP}). Hence, $X$ is regular, and we can apply absolute purity to the
closed immersion $Z^\prime \hookrightarrow X$: by proper base change and the fact that the local form of $q^\prime$ shows that it induces an isomorphism $Z^\prime \simeq Z$, we get that for any $p \in \BZ$, both $i_{Z}^! h_{S}(X /
S)(p)$ and $i_{Z}^* h_{S}(X / S)(p)$ belong to $\DMATSZ$. Now, again using proper base change, we see that $i_{Z}^* j_{U,*} h_{U}(X_{U} /
U)(p)$ is a cone of the canonical morphism $i_{Z}^! h_{S}(X / S)(p) \rightarrow i_{Z}^* h_{S}(X / S)(p)$. Hence, it belongs to $\DMATSZ$.
But the latter category is stable under direct factors by the induction hypothesis, so $i_{Z}^* j_{U,*} M_{U}(p)$ belongs to it as well. This means that the
functor $i_{Z}^* j_{U,*}$ maps the generators of $\DMAwT(U, \BQ)$ to $\DMATSZ$. As a consequence, the whole of $\DMAwT(U, \BQ)$ is mapped to
the latter category under $i_{Z}^* j_{U,*}$, and the criterion is fulfilled. 

If $Z$ is of codimension $\geq 2$ in $S$, then $q$ extends to a finite \'etale morphism $q^\prime:X \rightarrow S$ by purity of the branch locus (see \cite[Lem. 53.20.4, tag 0BMB]{SP}) and we conclude by reasoning as above.

The proof of the remaining points carries over word by word from the proof of the analogous points in \cite[Thm. 4.5]{Wil17}.  
\end{proof}

%Then, we introduce the following:
%\begin{df}{(cfr. \cite[Thm. 4.5 (c)]{Wil17})}
%The category $DMT_{\Sigma}(\CS(\Sigma))$ is the strict, full, dense, $\BQ$-linear triangulated subcategory of $\DBcM(\CS(\Sigma))$ generated by the $\bar{j}_* \one_{\overline{\CS_{\sigma}}}(p)$ for $\sigma \in \Sigma$ and $p \in \BZ$, where $\bar{j}$ denotes the closed immersion $\overline{\CS_{\sigma}} \hookrightarrow \CS(\Sigma)$. 
%\end{df}

Let us now set up a slightly different notation, in order to treat the more general geometric situations which we are interested in. 
%Come back to the setting $U \hookrightarrow S \hookleftarrow Z$ of subsection \ref{sec:weight} and suppose that 
Suppose that $S$ is equipped with a good stratification, denoted from now on by $\Phi$. The subcategories we are interested in will depend on the choice of a proper morphism $\pi: S^{\prime} \rightarrow S$ from a scheme $S^{\prime}$ which admits a good stratification $\FS$, such that the preimage via $\pi$ of any stratum of $\Phi$ is a union of strata (i.e. $\pi$ is a \emph{morphism of good stratifications}). Whenever $T$ is a subscheme of $S$, we will denote by $\FS_T$ the stratification induced by $\FS$ on $S^\prime_T$.  

It will be necessary to make the following assumptions on the nature of the stratifications $\FS$ and $\Phi$ and of the morphism $\pi$: 
%\mattia{J'ai rendu l'assomption suivante plus restrictive, pour faire marcher la nouvelle preuve du th\'eor\`eme \ref{artatesp}, voire ci-dessous. Elle reste bien sur verifi\'ee par nos fibr\'es en quadriques.} 
\begin{assumption}{(cfr. \cite[Ass. 5.6]{Wil17})} \label{asspmgs}
\begin{enumerate}[wide, labelindent=0pt, label=(\arabic*)]
\item the good stratification $\FS$ on $S^\prime$ satisfies Assumption \ref{assreg}; \label{upreg}
\item for all $\phi \in \Phi$, the strata $S_{\phi}$ are nilregular; \label{lowreg}
\item \label{shapemor} the morphism $\pi$ is surjective, and for all $\phi \in \Phi$ and $\sigma \in \FS$ such that $S^{\prime}_{\sigma}$ is a stratum of $\pi^{-1}(S_{\phi})$, the morphism $\pi_{\sigma}:S^{\prime}_{\sigma} \rightarrow S_{\phi}$
is proper with geometrically connected fibres, smooth, and such that  
%can be factorized, 
%\begin{equation*} \label{factorstrat}
%\pi_{\sigma}=\pi^{\prime}_{\sigma} \circ \pi_{\sigma}^{\prime \prime} : S^{\prime}_{\sigma} \xrightarrow{\pi_{\sigma}^{\prime \prime}} B_{\sigma} \xrightarrow{\pi_{\sigma}^{\prime}} S_{\phi}
%\end{equation*}
%such that: $(i)$ the morphism $\pi_{\sigma}^{\prime}$ is proper and smooth, and the smooth Chow motive $h_{S_{\phi}}(B_{\sigma} / S_{\phi})$ is \emph{finite dimensional} (in the sense of Kimura), and $(ii)$ the motive $h_{B_{\sigma}}(S^{\prime}_{\sigma} / B_{\sigma})$ 
the motive $h(S^{\prime}_{\sigma} / S_{\phi})$  belongs to the category of Tate motives over $S_{\phi}$. 
\end{enumerate}
\end{assumption}

\begin{rem} \label{chowtate}
Since we ask $\pi_{\sigma}$ to be proper and smooth, the previous assumption actually implies that $h(S^{\prime}_{\sigma} / S_{\phi})$ belongs to the category of \emph{weight zero}, \emph{smooth} Tate motives over $S_{\phi}$.
\end{rem}

%\begin{rem}
%If $S^{\prime}(\Phi)$ is such that for all $\phi \in \Phi$, the strata $S^{\prime}_{\phi}$ are regular, with regular closure, then Assumption \ref{asspmgs}.\ref{upreg} is satisfied, by \emph{absolute purity}. 
%\end{rem}

Fix a morphism $\pi:S^{\prime} \rightarrow  S$ of good stratifications $\Phi$ and $\FS$, satisfying Assumption \ref{asspmgs}.\ref{upreg}. 
%For any $\phi \in \Phi$, we will denote by $\FS_{S_{\phi}}$ the stratification induced by $\FS$ on the pullback of $S_{\phi}$ via $\pi$. 

\begin{df}\label{imstrmot}
The category $\DMATSP$ is the pseudo-abelian completion of the strict, full, $\BQ$-linear triangulated subcategory of $\DM_c(S,\BQ)$ generated by the images under $\pi_*$ of the objects of $\DMATS$. 
\end{df}

Suppose moreover that Assumption \ref{asspmgs}.\ref{lowreg} is satisfied. Then, reasoning in the same way as in the proof \cite[Cor. 4.11]{Wil17}, we see: 
\begin{lm}\label{ws}
\begin{enumerate}[wide, labelindent=0pt, label=(\arabic*)] 
\item 
The restriction of the motivic weight structure on $\DM_c(S, \BQ)$ induces a bounded weight structure on $\DMATSP$. For $\pi= id$, this gives a bounded weight structure on $\DMATS$.
\item \label{wsheart} The heart of the above weight structure on $\DMATSP$ is the pseudo-Abelian completion of the strict, full, $\BQ$-linear additive subcategory of $\DM_c(S,\BQ)$ generated by the images under $\pi_*$ of the objects of the heart of the weight structure on $\DMATS$. 
\end{enumerate}
\end{lm}

\begin{df} The heart of the weight structure on $\DMATSP$ given by the preceding Lemma is denoted by $\ATSP$.
\end{df}
%
%Let us record here a criterion to show that an object of $\DMATSP$ belongs to $\ATSP$: 
%
%\begin{prop} \label{critAT}
%Let $M$ be an object of $\DMATSP$. Suppose that for each $\phi \in \Phi$ and associated immersion $I_{\phi}:S_{\phi} \hookrightarrow S$, the object $I_{\phi}^*M$ belongs to $\ATwr(\FS_{S_{\phi}} / S_{\phi})$. Then, $M$ belongs to $\ATSP$.
%\end{prop}
%\begin{proof}
%Let $M$ be as in the statement. Suppose first that $\Phi$ consists only of two strata, an open one $j:U \hookrightarrow S$ and a closed complement $i: Z \hookrightarrow S$.  Now recall that by basic properties of weight structures, if $M$ is inserted in a triangle 
%\[
%A \rightarrow M \rightarrow B \rightarrow A[1]
%\]
%such that both $A$ and $B$ are of weight $\geq 0$, then $M$ is also of weight $\geq 0$. The same holds replacing \q{$\geq 0$} by \q{$\leq 0$}. We apply this property to the localization triangle for $M$ associated with the couple $j, i$ and to the its dual triangle, keeping in mind the fact that by our hypothesis on $M$, the objects $j^*M$ and $i^*M$ are both of weight 0, and remembering the weight-exactness properties of the six functors recalled in Thm. \ref{motweightstr}.\ref{compglu}. Then, we conclude that $M$ is both of weight $\geq 0$ and of weight $\leq 0$. In other words, it belongs to the heart $\ATSP$. The general case follows by induction on the number of strata.
%\end{proof}

Now we can finally state the result that we want to employ. 

\begin{thm} \label{artatesp}
Let $\pi:S^\prime \rightarrow S$ be a proper morphism of good stratifications $\FS$, $\Phi$ satisfying Assumption \ref{asspmgs}. Then, the category $\ATSP$ is semi-primary. 
\end{thm}
%\mattia{En rajoutant plus de d\'etails, je me suis rendu compte que dans la preuve suivante, je n'arrivais pas a faire marcher mon argument pr\'ec\'edent, qui montrait la dimension finie par passage au point g\'en\'erique. Je l'ai remplac\'e par l'usage de la factorisation de Stein, qui necessite l'Assomption 2 renforc\'ee. Je me demande si on peut \'eviter cela et sauver l'argument originel.}
\begin{proof}
By Thm. \ref{constmot} and proper base change (cfr. the analogous \cite[Cor. 4.10 (b)]{Wil17}), the category $\DMATSP$ is obtained by gluing the categories $\mbox{DM}^{\mbox{\tiny{AT}}}_{\mbox{\tiny{tr}}}(\FS_{S_{\phi}} / S_{\phi})$ for all $\phi \in \Phi$. Hence, to prove our claim, it is enough to prove semi-primality of $\mbox{AT}_{\mbox{\tiny{tr}}}(\FS_{S_{\phi}} / S_{\phi})$ for each stratum $S_{\phi}$ and then to apply Thm. \ref{semiprim}.\ref{glue} iteratively. 

We first observe that by Lemma \ref{ws}.\ref{wsheart}, the category $\mbox{AT}_{\mbox{\tiny{tr}}}(\FS_{S_{\phi}} / S_{\phi})$ is the pseudo-Abelian completion of the strict, full, $\BQ$-linear triangulated subcategory of $\DM_c(S_{\phi})$ of objects, which are isomorphic to images under $\pi_{\sigma}$ of tamely ramified Artin-Tate motives over $S^\prime_{\sigma}$, for $\sigma \in \FS$ such that $S^\prime_{\sigma}$ is a stratum of $\pi^{-1}(S_{\phi})$. This implies that, by reasoning as in \cite[Thm. 5.4]{Wil17}, the claim will follow as soon as we prove that the objects of the latter form are \emph{finite dimensional} in the sense of Kimura. 

Choose a couple of strata $\pi_{\sigma}: S_{\sigma}^\prime \rightarrow S_{\phi}$ as above and take a tamely ramified finite \'etale morphism $q:D \rightarrow S^\prime_{\sigma}$. Then, consider the Stein factorization of the morphism $\pi_{\sigma} \circ q$ (in the form provided by, for example, \cite[Thm. 6.2 (2)]{Cad13}). We obtain a commutative diagram 
\begin{center}
\begin{tabular}{c}
\xymatrix{
D \ar[r]^{q} \ar[d]_{p} & S^\prime_{\sigma} \ar[d]^{\pi_{\sigma}} \\
\tilde{S}_{\phi} \ar[r]^{r} & S_{\phi}
}
\end{tabular}
\end{center}
where $p$ is proper with connected fibres and $r$ is finite \'etale. Moreover, the fiber of $r$ over each point $s$ of $S_{\phi}$ is in set-theoretic bijection with the set of connected components of the fiber of $\pi_{\sigma} \circ q$ over $s$. Thus, because of our assumption \ref{asspmgs}.\ref{shapemor} on the properties of $\pi_{\sigma}$, the degree of $r$ is the same as the degree of $q$, say equal to $d$. We get a diagram 
\begin{center}
\begin{tabular}{c}
\xymatrix{
D \ar[ddr]_{p} \ar[dr]^{\iota} \ar[drr]^{q} & &  \\
& \tilde{S}_{\phi} \times_{S_{\phi}} S^\prime_{\sigma} \ar[r]_{q^\prime} \ar[d]^{p^\prime} & S^\prime_{\sigma} \ar[d]^{\pi_{\sigma}} \\
& \tilde{S}_{\phi} \ar[r]^{r} & S_{\phi}
}
\end{tabular}
\end{center}
where $q^\prime$ is finite \'etale of degree $d$, so that $\iota$ has to be finite \'etale of degree 1, i.e. it embeds $D$ as a connected component of $\tilde{S}_{\phi} \times_{S_{\phi}} S^\prime_{\sigma}$. Call the latter scheme $D^\prime$. Then, using proper base change and the fact that $q^\prime$ and $r$ are finite \'etale, we get
\[
h(D^\prime / \tilde{S}_{\phi})=p^\prime_* \one_{D^\prime} \simeq p^\prime_* q^{\prime *} \one_{S^\prime_{\sigma}} \simeq r^* \pi_{\sigma,*} \one_{S^\prime_{\sigma}} = r^* h(S^{\prime}_{\sigma} / S_{\phi})
\]
Since $h(S^{\prime}_{\sigma} / S_{\phi})$ belongs to the category of weight zero, smooth Tate motives over $S_{\phi}$ (again by Assumption \ref{asspmgs}.\ref{shapemor} and Rem. \ref{chowtate}), we obtain that $h(D^\prime / \tilde{S}_{\phi})$ belongs to the category of weight zero, smooth Tate motives over $\tilde{S}_{\phi}$. The motive $h(D / \tilde{S}_{\phi})$, being a direct factor of $h(D^\prime / \tilde{S}_{\phi})$, belongs to the same category as well. Now $h(D / S_{\phi})$ is isomorphic to the direct image of $h(D / \tilde{S}_{\phi})$ under the finite \'etale morphism $r$, and as a consequence, it is actually a weight zero, smooth Artin-Tate motive over $S_{\phi}$. As such, it is indeed finite dimensional (apply for example \cite[Prop. 5.8 (c)]{Wil17}). As the objects we were interested in are direct factors of objects of the form $h(D / S_{\phi})$, we conclude. 

%and  is proven in the same way as \cite[Thm. 5.4]{Wil17}: the proof carries over word by word as soon as we observe that with the notation of Assumption \ref{asspmgs}, for any finite \'etale cover $D_{\sigma} \rightarrow B_{\sigma}$, the smooth Chow motive $h_{S_{\phi}}(D_{\sigma} / S_{\phi})$ (seen over $S_{\phi}$ by composing with the morphism $\pi^\prime_{\sigma}$) is finite dimensional. This is seen by using \cite[Prop. 5.8 (b)]{Wil17} to reduce to the situation where $S_{\phi}$ is the spectrum of a field, in which case the claim is true by Assumption \ref{asspmgs} and \cite[Prop. 5.8 (a)]{Wil17}. 
\end{proof}

\begin{rem}\label{diff}
\begin{enumerate}[wide, labelindent=0pt, label=(\arabic*)]
%\item \label{wholesp} The proof of the above theorem actually uses a little less than Assumption \ref{asspmgs}: denoting by $\pi_{Z}$ the restriction of $\pi$ to $Z$, we only need the requirements of point \ref{shapemor} to be met for all $\phi \in \Phi_Z$ and $\sigma \in \FS$ such that $S^\prime_{\sigma}$ is a stratum of $\pi_{Z}^{-1}(S_{\phi})$. Admitting Assumption \ref{asspmgs} to be verified as it is, the proof shows that in fact, \emph{the whole} category $\ATSP$ is semi-primary. 
\item \label{fd} If we relax point \ref{shapemor} of Assumption \ref{asspmgs} by asking that $h(S^{\prime}_{\sigma} / S_{\phi})$ be simply \emph{finite dimensional}, the proof of the above theorem carries through, with the following adjustments. First, one exploits the commutative diagram coming from the Stein factorization and invokes proper base change and \cite[Prop. 5.8 (a)]{Wil17} in order to show that $h(D / \tilde{S}_{\phi})$ is also finite dimensional. Then, one shows that the same holds for $h(D / S_{\phi})$, by applying \cite[Prop. 5.8 (c)]{Wil17}.
\item
The above theorem differs in the following way from the analogous Thm. 5.4 in \cite{Wil17}. On the one hand, in order to deal with the gluing, we are forced to be more restrictive on the choice of possible morphisms $S^\prime \rightarrow S$. In fact, we ask for regularity of the closure of the strata of $S^\prime$, whereas in \emph{loc. cit.}, it is only asked the weaker condition that for every immersion $i_{\sigma}$ of a stratum $S^{\prime}_{\sigma}$ in the closure of a stratum, the functor $i_{\sigma}^!$ send the unit object to a Tate motive. Moreover, the morphisms $\pi_{\sigma}$ in \emph{loc. cit.} can belong to a more general class than the one considered here. On the other hand, our stronger restrictions are necessary because, for a fixed $S^\prime$ which fulfils our requirements, the categories that we glue along the strata of $S^\prime$ are more general than the ones of \emph{loc. cit.}
\end{enumerate}
\end{rem}

The proof of the above theorem shows in particular that for each $\phi \in \Phi$, denoting by $Z_{\phi}$ the complement of a stratum $S_{\phi}$ in its closure $\overline{S_{\phi}}$, the category $\mbox{AT}_{\mbox{\tiny{tr}}}(\FS_{Z_{\phi}} / Z_{\phi})$ is semiprimary. So we get: 

\begin{cor}\label{intext}
Let $\pi:S^\prime \rightarrow S$ be a proper morphism of good stratifications $\FS$ and $\Phi$, satisfying Assumption \ref{asspmgs}. For each $\phi \in \Phi$, denote by $j_{\phi}: S_{\phi} \hookrightarrow \overline{S_{\phi}}$ the open immersion of a stratum in its closure. Then for each $\phi \in \Phi$, the intermediate extension functor
\[
j_{\phi,!*} : \emph{\mbox{AT}}_{\emph{\mbox{\tiny{tr}}}}(\FS_{S_{\phi}} / S_{\phi}) \hookrightarrow \emph{\mbox{AT}}_{\emph{\mbox{\tiny{tr}}}}(\FS_{\overline{S_{\phi}}} / \overline{S_{\phi}})^u
\]
is defined (as in Def. \ref{intext}). 
\end{cor}

\begin{cor}
\label{cor: decomposition}
Let $\pi:S^\prime \rightarrow S$ be a proper morphism of good stratifications $\FS$ and $\Phi$, satisfying Assumption \ref{asspmgs}. Let $M$ be an object of the category $\ATSP$. Consider the notations of the previous Corollary, and denote moreover by $i_{\phi} : \overline{S_{\phi}} \hookrightarrow S$ the closed immersion of the closure of a stratum. Then, there exist a subset $\Phi^\prime \subset \Phi$, objects $N_{\phi}$ in $\ATwr(\FS_{S_{\phi}} / S_{\phi})$, $\phi \in \Phi^\prime$, and a non-canonical isomorphism
\[
M \simeq \bigoplus\limits_{\phi \in \Phi^\prime} i_{\phi,*}j_{\phi,!*} N_{\phi}
\] 
\end{cor}
\begin{proof}
We may always suppose, for simplicity, that there is only one open stratum $U:=S_{\phi_1}$ of $\Phi$, and denote $j_{\phi_1}$ by $j:U \hookrightarrow S$.
%(with an induced, trivial stratification denoted by the same symbol $U$). The closed complement $Z$ of $U$ in $S$ inherits a good stratification $\Phi_Z$; moreover, we have that $S^\prime_U$, resp. $S^\prime_Z$, inherit good stratifications $\FS_U$, resp. $\FS_Z$, and that $\pi$ induces morphisms of good stratifications $S^\prime_U \rightarrow U$, $S^{\prime}_Z \rightarrow Z$. 
By using  Cor. \ref{intext} and applying Prop. \ref{scind}, we know that we have an isomorphism 
\[
M \simeq j_{!*}M_U \oplus i_*N
\]
with $N$ an object of $\ATwr(\FS_Z / Z)$. By proper base change and part (2) of Thm. \ref{constmot}, and the fact that pullback along open immersions sends weight-zero objects to weight-zero objects, we know that the pullback of $N$ to any stratum $S_{\phi}$ which is open in $Z$ belongs to $\ATwr(\FS_{S_{\phi}} / S_{\phi})$. Thus, we can apply to it the functor $j_{\phi,!*}$ (defined using again Cor. \ref{intext}). The statement then follows by an iterated application of Prop. \ref{scind}. 
\end{proof}

\begin{notation}
To ease notation, in future applications we will often write $j_{\phi}$ for the immersion $i_{\phi} \circ j_{\phi}$, and write $j_{\phi,!*}$ for the functors $i_{\phi,*}j_{\phi,!*}$ appearing in the decomposition of the previous Corollary. 
\end{notation}

\subsection{Compatibility with realizations and conservativity}. 
In this paragraph, we will fix a generic point $\Spec k \rightarrow S$ of our base $S$, and we will 
%fix a generic point $\Spec k$ of our base $\BB$ \jan{base S ?}, and assume that either $(i)$ the field $k$ is embedded into $\BC$ via a morphism $\eta : k \rightarrow \BC$, and the base change $S_k$ of our fixed scheme $S$ is quasi-projective over $k$, or $(ii)$ the field $k$ is of characteristic zero, and $\ell$ is a prime. In this context, we have two kinds of \emph{realization functors} at our disposal: the \emph{Betti realization}
%\[
%R_{\eta}: \DBcM(S) \rightarrow D^b_c(S_k)
%\]
%and the $\ell$\emph{-adic realization}
%\[
%R_{\ell} \DBcM(S) \rightarrow D^b_c(S_k)
%\]
%whose targets, denoted by the same symbol, are respectively the constructible category over the complex analytic space $S_{k,\eta}(\BC)$ and the bounded \q{derived category} of constructible $\BQ_{\ell}$-sheaves on $S_k$, denoted by the same symbol. 
make use of the two realization functors with target the categories $\Der^b_c(S_{k,\et}, \BQ_{\ell})$ and $\Der^b_c(S_k^{\an}, \BQ)$, obtained by composition with base change through $\Spec k \rightarrow S$ from the functors $\rho_{\ell}$, $\rho_B$ introduced in the \q{Notations and conventions} section. Whenever we employ one of these two functors, we will implicitly assume that the hypotheses on $S$ and $\ell$ are satisfied. These functors will still be denoted by the same symbols. 

Let us denote any of the two families of categories $\Der^b_c(S_{k,\et}, \BQ_{\ell})$ and $\Der^b_c(S_k^{\an}, \BQ)$ by the same symbol $\Der^b_c(S_k)$. Both families of categories are equipped with a perverse $t$-structure, whose heart (the corresponding category of \emph{perverse sheaves}) will be denoted $\Perv_c(S_k)$ in both cases. We will then denote by
\[
H^m:\Der^b_c(S_k) \rightarrow \Perv_c(S_k)
\]
the perverse cohomology functors, and if $j:U \hookrightarrow S$ is an open immersion, by 
\[
j_{!*} : \Perv_c(U_k) \rightarrow \Perv_c(S_k) 
\]
the \emph{intermediate extension} of perverse sheaves (\cite[Déf. 1.4.22]{BBD82}). The composition of the collection of the perverse cohomology functors with one of the realization functors will be called the corresponding \emph{perverse cohomological realization} functor.

The following result gives the compatibility of the functor of Def. \ref{intext} (when available) with the realization functors:

%we strengthen Assumption \ref{asspmgs} in the following way:
%\begin{assumption}{(cfr. \cite[Ass. 7.1]{Wil17})} \label{asspgmsstr}
%The proper morphism $\pi:S^{\prime} \rightarrow S$ of good stratifications $\FS$ and $\Phi$ satisfies Assumption \ref{asspmgs} and moreover, in Assumption \ref{asspmgs}.\ref{factorstrat}, the morphism $\pi_{\sigma}^\prime:B_{\sigma} \rightarrow S_{\phi}$ is such that the pull-back of $B_{\sigma,k}$
%to any geometric point of $S_{\phi,k}$ lying over a generic point is isomorphic to a finite disjoint union of abelian varieties. 
%\end{assumption}

\begin{thm}\label{real}
Let $\pi:S^{\prime} \rightarrow S$ be a proper morphism of good stratifications $\FS$ and $\Phi$, satisfying Assumption \ref{asspmgs}. Denote by $\rho$ any of the two realization functors $\rho_{\ell}$ or $\rho_B$. For each $\phi \in \Phi$, denote by $j_{\phi}: S_{\phi} \hookrightarrow \overline{S_{\phi}}$ the open immersion of a stratum in its closure. Then:
\begin{enumerate}[wide, labelindent=0pt, label=(\arabic*)]
\item for any $\phi \in \Phi$, for any integer $m$, the restriction of the composition 
\[
H^m \circ \rho : \DM_c(\overline{S_{\phi}},\BQ) \rightarrow \Perv_c(\overline{S_{\phi}}_k)
\]
to $\emph{\mbox{AT}}_{\emph{\mbox{\tiny{tr}}}}(\FS_{\overline{S_{\phi}}} / \overline{S_{\phi}})$ factors over $\emph{\mbox{AT}}_{\emph{\mbox{\tiny{tr}}}}(\FS_{\overline{S_{\phi}}} / \overline{S_{\phi}})^u$;
\item for any $\phi \in \Phi$, for any integer $m$, the diagram 
\begin{center}
\begin{tabular}{c}
\xymatrix{
\emph{\mbox{AT}}_{\emph{\mbox{\tiny{tr}}}}(\FS_{S_{\phi}} / S_{\phi}) \ar[r]^{j_{\phi,!*}} \ar[d]_{H^m \circ \rho} & 
\emph{\mbox{AT}}_{\emph{\mbox{\tiny{tr}}}}(\FS_{\overline{S_{\phi}}} / \overline{S_{\phi}})^u \ar[d]^{H^m \circ \rho} \\
\Perv_c(S_{\phi,_k}) \ar[r]^{j_{!*}} & \Perv_c(\overline{S_{\phi}}_k)
}
\end{tabular}
\end{center}
commutes. 
\end{enumerate}
\end{thm}
\begin{proof}
The same proof of \cite[Thm. 7.2]{Wil17} applies. Indeed, the only ingredient occurring in that proof, which has to be generalized, is \emph{op. cit.}, Cor. 7.13: we need to obtain an analogous statement in our situation, i.e. we need to show that the radical (cfr. Def. \ref{semipr}) of our categories $\mbox{AT}_{\mbox{\tiny{tr}}}(\FS_{\overline{S_{\phi}}} / \overline{S_{\phi}})$ is mapped to zero under the perverse cohomological realization. The proof of Thm. \ref{artatesp} tells us that over each stratum $S_{\phi}$ of $S$, the category $\mbox{AT}_{\mbox{\tiny{tr}}}(\FS_{S_{\phi}} / S_{\phi})$ is actually contained in the category $\mbox{AT}(S_{\phi})$ of weight zero, smooth Artin-Tate motives over $S_{\phi}$. Then, the same strategy of proof of \emph{op. cit.}, Cor. 7.13, based on \emph{op. cit.}, Thm. 7.12, shows that we can obtain the desired statement as soon as we prove the following claim: for any couple of finite \'etale morphisms 
\[
q_1:D_1 \rightarrow X, \ q_2:D_2 \rightarrow X
\]
the ideal 
\[
\rad_{\Chow(X)}(h_X(D_1), h_{X}(D_2))
\]
consists of morphisms which are mapped to zero under the perverse cohomological realization. Now, by the same reasoning as in the proof of \emph{loc. cit.}, Thm. 7.12, we may assume that $X$ is the spectrum of a field $k$. But then, $h_X(D_1)$ and $h_{X}(D_2)$ are Artin motives over $k$, i.e. objects of a full \emph{semisimple} subcategory of $\Chow(X)=\Chow(k)$. Hence, the radical in question is zero. 
\end{proof}

%\begin{rem}
%Suppose the abelian varieties intervening in Assumption \ref{asspmgs} to be of dimension zero. Then, when choosing $\rho$ as the $\ell$-adic realization in Thm. \ref{real}, the running assumption of this paragraph, on the characteristic of $k$ being 0, can be removed. 
%\end{rem}
%
%\mattia{Comme je ne suis plus convaincu de l'argument "passage au point g\'en\'erique", je n'ai pas rajout\'e pour l'instant aucune remarque style Rmk. \ref{diff}.\ref{fd} sur une extension du th\'eor\`eme ci-dessus au cas o\'u les morphismes $\pi_{\sigma}$ aient une partie ab\'elienne non-triviale.}

We end this section by discussing the conservativity of the restriction of the realization functors to the categories $\DMATSP$. 

\begin{thm} \label{cons}
Let $\pi:S^{\prime} \rightarrow S$ be a proper morphism of good stratifications $\FS$ and $\Phi$, satisfying Assumption \ref{asspmgs}. Denote by $\rho$ any of the two realization functors $\rho_{\ell}$ or $\rho_B$. Then, the restriction of $\rho$ to $\DMATSP$,
\[
\rho: \DMATSP \rightarrow \Der^b_c(S_k)
\]
is conservative.
\end{thm}
\begin{proof}
We will adopt the strategy of proof \cite[Thm. 4.3]{Wil18}, which adapts to our setting and shows that conservativity of both realizations ($\ell$-adic and Betti) can be deduced if the following properties hold\footnote{The proof of \emph{loc. cit.} takes as starting point \emph{op. cit.}, Thm. 2.10, which shows that conservativity follows once we add to these hypotheses an additional one: \emph{strictness} of any morphism in the image of the perverse cohomological realization, with respect to the \emph{weight filtration} of the latter functor. Only the $\ell$-adic realization is known to satisfy this last assumption. Building on this, the proof of Thm. 4.3 of \emph{loc. cit.} then shows how to obtain conservativity for the Betti realization, too.}: (1) the weight structure on $\DMATSP$ is bounded, (2) its heart $\ATSP$ is semi-primary and pseudo-Abelian, (3) the restriction of $\rho$ to $\ATSP$ maps the radical to zero, and (4) zero is the only object of $\ATSP$ mapped to zero by $\rho$. The first two properties have been verified before (Lemma \ref{ws} and Thm. \ref{artatesp}), and property (3) has been seen to hold in the course of the proof of Thm. \ref{real}. With this in our hands, we can imitate step by step the proof of \emph{op. cit.}, Thm. 4.2 to show that property (4) is also verified and to conclude. In fact, to argue as in \emph{loc. cit.} we only need the existence of the intermediate extension functor defined in Cor. \ref{intext}, the fact that objects in $\ATSP$ decompose as direct sums of intermediate extensions (Cor. \ref{cor: decomposition}), and the finite dimensional nature of the objects of the categories $\mbox{AT}_{\mbox{\tiny{tr}}}(\FS_{S_{\phi}} / S_{\phi})$ (proof of Thm. \ref{artatesp}). 
\end{proof}

\section{Motivic decompositions of projective quadric bundles}

\subsection{Corti-Hanamura decomposition of general quadrics}

\begin{num}
In this section we work over a field $k$ of characteristic zero. Let $V$ be a $k$--vector space of dimension $n$.
 A quadratic form $Q\in S^2V^{\vee}$ can be viewed as a symmetric map $Q:V\to V^{\vee}$. Its radical is the subspace $\Rad(Q) = \ker(Q)$.
 It is known from the theory of quadratic forms that $Q$ descends to a symmetric bilinear form $\Qbar$ on the quotient space $\Vbar = V/\Rad(Q)$ (cf. \cite[2.6]{Cassels}).
 We say that $Q$ has \emph{corank $r$} if $\dim(\Rad(Q))=r$. The zero locus of $Q$ defines a quadric $X\subset\PP(V)$ which is smooth if and only if $r=0$. If $r>0$ its singular locus is the linear subspace $\Lambda =\PP(\Rad(Q))$, and $X$ is a cone with vertex $\Lambda$ over the smooth quadric $\Xbar = V(\Qbar)\subset\PP(\Vbar)\cong\PP^{n-r-1}_k$. 

\medskip
Let 
$$
\Delta_k = \{Q\in S^2V^{\vee}|\dim (\ker Q)\ge k\}  
$$
be the space of quadratic forms of corank $\ge k$. 
%It is known that the codimension of $\Delta_k$ in $S^2V^{\vee}$ is ${{k+1}\choose 2}$ and that $\Sing(\Delta_k) = \Delta_{k+1}$ (REF). 
The standard desingularization of $\Delta_k$ is
$$
\widehat{\Delta}_k = \{(Q,F)\in S^2V^{\vee}\times G(k,V)| F\subset\ker Q\}.
$$
For our purposes it is convenient to use a slightly different construction. We denote by $\Fl(V) = \Fl(1,\ldots,n-1,V)$ the variety of complete flags 
$$
F_1\subset ... \subset F_{n-1}\subset F_n=V
$$
where $\dim(F_i) = i$. Define
$$
\widetilde{\Delta}_k = \{(Q,F_{\bullet})\in S^2V^{\vee}\times\Fl(V)|F_k\subset\ker Q\}.
$$
The fiber of the projection map $p_2:\widetilde{\Delta}_k\to\Fl(V)$ over $F_{\bullet}$ is the vector space $H^0(\PP(V),{\cal I}_{\PP(F_k)}(2))$. Hence $\widetilde{\Delta}_k$ is smooth. The advantage over the previous construction is that we have inclusions
$$
\widetilde{\Delta}_{k+1}\subset\widetilde{\Delta}_k\subset\Fl(V)
$$
for all $k$.
\end{num}

\begin{num}
We now discuss the analogue of these constructions in the relative case. Let $S$ be a quasi--projective scheme over $k$. Let $E$ be a rank $n$ vector bundle over $S$, and let $L$ be a line bundle over $S$. A quadratic form on $E$ with values in $L$ is a global section $q\in H^0(S,S^2E^{\vee}\otimes L)$, or equivalently a symmetric homomorphism
$$
q:E\to E^{\vee}\otimes L.
$$
Let $\rho:\PP(E)\to S$ be the associated projective bundle with tautological line bundle $\xi_E = \CO_{\PP(E)}(1)$. Using the isomorphism $$H^0(S,S^2E^{\vee}\otimes L)\cong H^0(\PP(E),\xi_E^2\otimes\rho^*L)$$ we can identify $q$ with a global section (still denoted by $q$) of  $\xi_E^2\otimes\rho^*L$. The associated quadric bundle is $X = V(q)\subset\PP(E)$. The fiber of $f:X\to S$ over $s\in S$ is the zero locus of $q(s)\in S^2 E_s^{\vee}\otimes L_s$.  

\medskip
We write
$$
\Delta_i(q) = \{s\in S\vert \corank q(s)\ge i\},\ \ U_i = \Delta_i(q)\setminus\Delta_{i+1}(q).
$$
The restriction of $q$ to $U_i$ defines a homomorphism of vector bundles 
$$
q_i:E_i\to E_i^{\vee}\otimes L_i
$$
whose kernel $F_i = \ker(q_i)$ is a subbundle of $E_i$ of rank $i$. As before, the quadratic form $q_i$ descends to a quadratic form $\overline{q}_i$ on the quotient $\Ebar_i = E_i/F_i$. Geometrically this means that
the subscheme $X_i = f^{-1}(U_i)$ is a relative cone over $\Xbar_i = V(\qbar_i)\subset\PP(\Ebar_i)$ with vertex $\PP(F_i)$ i.e., for every $s\in U_i$ the quadric $X_s = f^{-1}(s)$ is a cone with vertex $\PP(F_{i,s})$ over $\Xbar_s\subset\PP(\Ebar_{i,s})$. 

%Let 
%$$
%\Delta_k = \{Q\in S^2V^{\vee}|\dim (\ker Q)\ge k\}  
%$$
%be the space of quadratic forms of corank $\ge k$. It is known that the codimension of $\Delta_k$ in $S^2V^{\vee}$ is ${{k+1}\choose 2}$ and that $\Sing(\Delta_k) = \Delta_{k+1}$. 

%We denote by $\Fl(V) = \Fl(1,\ldots,r-1,V)$ the variety of complete flags 
%$$
%\Lambda_1\subset ... \subset\Lambda_{r-1}\subset V
%$$
%where $\dim(\Lambda_i) = i$. Define
%$$
%\widetilde{\Delta}_k = \{(Q,\Lambda_1,\ldots,\Lambda_{r-1})\in S^2V^{\vee}\times\Fl(V)|\Lambda_k\subset\ker Q\}.
%$$
%The fiber of the projection map $p_2:\widetilde{\Delta}_k\to\Fl(V)$ over $\underline{\Lambda} = (\Lambda_1,\ldots,\Lambda_{r-1})$ is the vector space $H^0(\P(V),{\cal I}_{\P%(\Lambda_k)}(2))$. Hence $\widetilde{\Delta}_k$ is smooth. 

%Let $X$ be a smooth quasi-projective variety over $k$. Let $E$ be a vector bundle over $X$ of rank $r$, and $L$ a line bundle $L$ over $X$. Recall that a quadratic form on $E$ %with values in $L$ is a symmetric homomorphism
%$$
%q: E\to E^{\vee}\otimes L
%$$
%or equivalently, a global section $q\in H^0(X,S^2E^{\vee}\otimes L)$. 

Let $\Fl(E)$ be the bundle of complete flags in $E$ with projection map $\pi:\Fl(E)\to S$. The vector bundle $\pi^*E$ has a flag of universal subbundles $S_i$ of rank $i$, $i=1,\ldots,n-1$.  The composition of  
$$
\pi^*q:\pi^*E\to\pi^*E^{\vee}\otimes\pi^*L
$$
and the inclusion $\lambda_i:S_i\to\pi^*E$ defines 
$$
\widetilde{q}_i = \pi^*(q)\circ\lambda_i:S_i\to\pi^*E^{\vee}\otimes\pi^*L.
$$
Define 
$$
{\widetilde\Delta}_i(q) = V(\widetilde{q}_i)\subset\Fl(E).
$$

%The composition of maps
%$$
%H^0(X,S^2E^{\vee}\otimes L)\buildrel{\pi^*}\over\to H^0(\Fl(E),\pi^*(S^2E^{\vee}\otimes L))\to H^0(\Fl(E),S^2S_i^{\vee}\otimes\pi^* L)
%$$  
%defines a map
%$$
%\psi: H^0(X,S^2E^{\vee}\otimes L)\to H^0(\Fl(E),S^2S_i^{\vee}\otimes\pi^*L).
%$$ 
\end{num}
\begin{prop}
\label{prop: regular}
If %$S^2E^{\vee}\otimes L$ 
$E^{\vee}\otimes E^{\vee}\otimes L$ is generated by global sections and $q\in H^0(X,S^2E^{\vee}\otimes L)$ is general, then
\begin{enumerate}
\item $\Delta_i(q)$ is empty or has the expected codimension $\binom{i+1}2$ %${{i+1}\choose 2}$
 and $\Sing(\Delta_i(q)) = \Delta_{i+1}(q)$.
\item $\widetilde{\Delta}_i(q)$ is smooth. 
\end{enumerate}
\end{prop}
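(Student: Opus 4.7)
Let $W = H^0(S, S^2 E^{\vee} \otimes L)$ be the parameter space. The plan is to set up, for each of the two statements, a universal incidence variety over $W$, show it is smooth using the global generation hypothesis, and then apply generic smoothness in characteristic zero to the projection onto $W$ to extract the conclusion for a general $q$.

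For part (2), I would introduce the universal incidence
$$
\mathcal{Y}_i = \{(q, y) \in W \times \Fl(E) : \widetilde{q}_i(y) = 0\},
$$
writing $s = \pi(y)$ and $F_{\bullet}$ for the flag at $y$. The second projection exhibits $\mathcal{Y}_i$ as a linear subbundle of the trivial bundle $W \times \Fl(E)$ over $\Fl(E)$: at the point $y$, the fiber is the kernel of the evaluation $W \to B_y$, where $B_y$ is the image of the natural restriction $S^2 E_s^{\vee} \otimes L_s \to F_i^{\vee} \otimes E_s^{\vee} \otimes L_s$ (matrices whose top $i \times i$ block is symmetric). Global generation of $E^{\vee} \otimes E^{\vee} \otimes L$ --- and therefore of $S^2 E^{\vee} \otimes L$ in characteristic zero --- makes the composition $W \to (S^2 E^{\vee} \otimes L)_s \to B_y$ surjective at every $y$, so $\mathcal{Y}_i$ is smooth. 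Generic smoothness applied to $p_1 : \mathcal{Y}_i \to W$ then identifies the general fiber with $\widetilde{\Delta}_i(q)$ and yields its smoothness.

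For part (1), I would run an analogous argument using
$$
\mathcal{Z}_i = \{(q, s) \in W \times S : \corank q(s) \geq i\}.
$$
The same global-generation argument shows $W \to (S^2 E^{\vee} \otimes L)_s$ is surjective, so the fiber of $\mathcal{Z}_i \to S$ at $s$ is the preimage of the classical determinantal stratum $\Sigma_i$ of symmetric forms of corank $\geq i$ on $E_s$. The standard facts that $\Sigma_i$ has codimension $\binom{i+1}{2}$ with singular locus $\Sigma_{i+1}$ transfer through this surjection, so $\mathcal{Z}_i$ is equidimensional of codimension $\binom{i+1}{2}$ in $W \times S$ and smooth away from $\mathcal{Z}_{i+1}$. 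Generic smoothness applied to $p_1 : \mathcal{Z}_i \setminus \mathcal{Z}_{i+1} \to W$ gives, for general $q$, that $\Delta_i(q) \setminus \Delta_{i+1}(q)$ is smooth of the expected codimension. The reverse inclusion $\Delta_{i+1}(q) \subseteq \Sing(\Delta_i(q))$ follows because, for generic $q$, the map $s \mapsto q(s)$ is transverse to the stratification of the symmetric form bundle, and $\Sigma_i$ is singular along $\Sigma_{i+1}$.

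The key step is verifying surjectivity of the evaluation maps. This requires pinning down the correct target --- the image under restriction from symmetric tensors, not the full space $F_i^{\vee} \otimes E_s^{\vee} \otimes L_s$ --- and then invoking the hypothesis on $E^{\vee} \otimes E^{\vee} \otimes L$. Once that is in place, the rest is a routine application of generic smoothness together with the classical local theory of symmetric determinantal varieties.
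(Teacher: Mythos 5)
Your argument is correct and is essentially the paper's: the paper deduces part (1) from the Banica/Ottaviani results on degeneracy loci of general sections of globally generated (symmetric) bundles and part (2) from Bertini, and your incidence-variety-plus-generic-smoothness construction over $W=H^0(S,S^2E^{\vee}\otimes L)$ is precisely the standard argument behind those citations. If anything you are more explicit on the one delicate point, namely that $\widetilde{q}_i$ must be regarded as a section of the subbundle of partially symmetric homomorphisms (your $B_y$) rather than of all of $S_i^{\vee}\otimes\pi^*E^{\vee}\otimes\pi^*L$, which is what makes the Bertini-type step give smoothness of $\widetilde{\Delta}_i(q)$ in the correct codimension.
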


\begin{proof}
%Write $V = H^0(X,S^2E^{\vee}\otimes L)$. By assumption we have surjective maps
%$$V\otimes\CO_X\to S^2E^{\vee}\otimes L,\ \ V\otimes\CO_{\Fl(E)}\to S^2S_k^{\vee}\otimes\pi^*L.$$
% These maps induce smooth morphisms
%$\alpha:V\times X\to \V(S^2E^{\vee}\otimes L)$ and 
%$\beta:V\times\Fl(E)\to\V(S^2S_k^{\vee}\otimes\pi^*L)$.

%Consider the homomorphism of vector bundles
%$$\varphi:S^2E^{\vee}\otimes L\to S^2(\wedge^{r-k+1}E^{\vee})\otimes L^{r-k+1}
%$$
%that sends $q\in \Hom_{\rm sym}(E,E^{\vee}\otimes L)$ to 
%$$\wedge^{r-k+1}q\in \Hom_{\rm sym}(\wedge^{r-k+1}E,\wedge^{r-k+1}E^{\vee}\otimes L^{r-k+1}).$$ Let $Q_k$ be the inverse image of the zero section under $\varphi$, and put %$Z_k = \lambda^{-1}(Q_k)\subset V\times X$. Let $\widetilde{Z}_k\subset V\times\Fl(E)$ be the inverse image of the zero section of $S^2S_k^{\vee}\otimes\pi^*L$ under $\beta$. %Projection to the first factor defines morphisms
%$$
%\lambda: Z_k\to V,\ \ \mu: \widetilde{Z}_k\to V
%$$
% such that $\lambda^{-1}(q) = \Delta_k(q)$, $\mu^{-1}(q) = \widetilde{\Delta}_k(q)$. 

Note that if $E^{\vee}\otimes E^{\vee}\otimes L$ is generated by global sections, then the bundles $S^2E^{\vee}\otimes L$, $\pi^*(E^{\vee}\otimes E^{\vee}\otimes L)$ and $S_i^{\vee}\otimes\pi^*E^{\vee}\otimes\pi^*L$ are globally generated.
Part 1 is proved by adapting the argument of \cite[4.1]{Banica} to the symmetric case; cf. \cite[2.17]{Ottaviani}. Part 2 follows from Bertini's theorem.  
%As $\beta$ is a smooth morphism, the variety $\widetilde{Z}_k$ is smooth. Hence part 2 follows from generic smoothness [Hartshorne, III 10.6.7].
\end{proof}

\begin{df} \label{def:regular}
We say that $X\to S$ is a {\em regular} quadric bundle if it satisfies the conclusions of the Proposition \ref{prop: regular}.  
\end{df}

\medskip
A quadric bundle $f:X\to S$ admits a natural stratification by corank. Write $U_i = \Delta_i(q)\setminus\Delta_{i+1}(q)$, $X_i = f^{-1}(U_i)$. As the stratification ${\Phi} = \{U_i\}_{i\in I}$ does not verify Assumption \ref{assreg}, we have to pass to a suitable base change and verify Assumption \ref{asspmgs}. Define $S' = \Fl(E)$ and consider the stratification $\mathfrak S$ given by $U_i'=\widetilde{\Delta}_i(q)\setminus\widetilde{\Delta}_{i+1}(q)$. Write $X' = X\times_S S'$ and $X_i' = X_i\times_{U_i} U_i'$.

\begin{lm}
\label{lemma: Assumption2}
Let $X\to S$ be a regular quadric bundle. Then the stratification $\FS = \{U'_i\}_{i\in I}$ satisfies Assumption \ref{asspmgs}.
%\mattia{The assumption are now correctly numbered, this one is labeled \texttt{asspmgs}.}
\end{lm}

\begin{proof}
Proposition \ref{prop: regular} implies that the stratification $\FS$ satisfies conditions (1) and (2) of Assumption \ref{asspmgs}. 
The definition of $\widetilde{\Delta}_i(q)$ shows that the fiber of $\pi_i:\widetilde{\Delta}_i(q)\to\Delta_i(q)$ over $s\in\Delta_i(q)$ is
$$
\{(W_{\bullet})\in\Fl(E_s)|W_i\subset\ker q(s)\}.
$$
If $s\in U_i$ then $\ker(q(s))$ has dimension $i$, hence the fiber of the induced map $U_i'\to U_i$ over $s\in U_i$ is
$$
\{(W_{\bullet})\in\Fl(E_s)|W_i = \ker(q(s)\}\cong\Fl(\ker(q(s))\times\Fl(i+1,\ldots,n,E_s).
$$
Over $U_i$ we have an injective homomorphism of flag bundles
$$
\Fl(F_i)\times\Fl(i+1,\ldots,n;E_i)\to\Fl(E_i)
$$
whose image is $U'_i$. Hence $U_i'\to U_i$ is a relative homogeneous space and $h_{U_i}(U'_i)$ is a relative Tate motive. This implies that the stratification $\FS$ also satisfies condition (3) of Assumption \ref{asspmgs}.
\end{proof}

\begin{thm} \label{quadric_AT}
Let $X\to S$ be a regular quadric bundle. Then $h_S(X)\in\ATSP$.
\end{thm}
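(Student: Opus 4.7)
The plan is to combine the stratification machinery unlocked by Lemma \ref{lemma: Assumption2} with a Corti--Hanamura-style decomposition of the fibers of the cone/smooth quadric pair produced by the flag data. The overall strategy is: pass to the flag-bundle base change $S'=\Fl(E)$; decompose $h_{S'}(X')$ stratum by stratum; on each stratum, use the flag to exhibit $X'_i$ as a projective cone over a smooth quadric bundle, and analyze the two pieces separately; finally descend back to $S$ using that $\pi\colon \Fl(E)\to S$ is a tower of projective bundles.

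First I would invoke Lemma \ref{lemma: Assumption2}, which asserts that the stratification $\FS=\{U'_i\}$ on $S'=\Fl(E)$ satisfies Assumption \ref{asspmgs}. Combined with the formal properties of the class $\ATSP$ under the stratification formalism, this reduces the problem to showing that $h_{U'_i}(X'_i)\in\ATSP(U'_i)$ for every $i$, where $X'_i = X_i\times_{U_i}U'_i$. Once this is done, descent along the flag bundle $\pi\colon\Fl(E)\to S$---whose relative motive is a sum of Tate twists by the projective bundle formula iterated---recovers $h_S(X)\in\ATSP$.

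The analysis of the stratum $X'_i\to U'_i$ proceeds as follows. By the very construction of $\widetilde{\Delta}_i(q)$, over $U'_i$ the universal rank-$i$ subbundle $S_i\subset\pi^*E$ coincides with the kernel bundle $F_i$ of $q$, so fiberwise the quadric $X'_i$ is a cone with vertex $\PP(S_i)$ over the smooth quadric $\Xbar_i\subset\PP(\Ebar_i)$ where $\Ebar_i=E_i/F_i$. Blowing up the vertex linear subbundle $\PP(S_i)$ in $X'_i$ produces the total space of a $\PP(S_i)$-bundle (the family of rulings) over $\Xbar_i$. The blow-up formula, applied to this birational modification, together with the projective bundle formula, expresses $h_{U'_i}(X'_i)$ as an assembly of Tate twists of $h_{U'_i}(\Xbar_i)$ and of $h_{U'_i}(\PP(S_i))$; the latter is manifestly a relative Tate motive.

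The main obstacle---and the substance of this section's title---is to show that the smooth quadric bundle $\Xbar_i\to U'_i$ lies in $\ATSP(U'_i)$. This is the relative Corti--Hanamura decomposition of smooth quadrics: in even relative dimension the motive breaks as a sum of Tate twists, while in odd relative dimension there is one extra summand given by a twist of the motive of the associated discriminant double cover, which is an Artin-type motive fitting into $\ATSP$. Crucially, the flag $F_{i+1}\subset\cdots\subset F_n$ supplied over $U'_i$ gives a canonical isotropic subbundle in $\Ebar_i$, which is precisely the data required to trivialize the Corti--Hanamura decomposition of $\Xbar_i$ uniformly over $U'_i$. Combining this with the cone analysis above yields $h_{U'_i}(X'_i)\in\ATSP(U'_i)$, and stratum-wise assembly followed by flag-bundle descent completes the proof.
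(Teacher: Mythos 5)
Your overall skeleton matches the paper's: base change to $S'=\Fl(E)$, the stratification of Lemma \ref{lemma: Assumption2}, the cone structure of $X'_i$ over the smooth quadric bundle $\overline{X}'_i$, and descent to $S$ via the projection formula for $\pi_*\un_{S'}$. But the central claim of your stratum analysis is wrong: the flag $F_{i+1}\subset\cdots\subset F_n$ does \emph{not} give an isotropic subbundle of $\Ebar_i$, and cannot "trivialize" the decomposition of $\overline{X}'_i$. The defining condition of $\widetilde{\Delta}_i(q)$ only forces $S_i\subset\ker q$; the images of the higher flag steps in $\Ebar_i$ are generic, not isotropic for $\qbar_i$. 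Moreover, even granting extra isotropic data, the decomposition of a smooth quadric bundle of even fiber dimension genuinely contains a non-Tate Artin--Tate summand attached to the discriminant double covering, and this is not trivializable uniformly over $U'_i$ --- the Prym summand in the conic-bundle remark is exactly such a non-constant piece. What is actually available (Corollary \ref{cor:sm_quadrics}) is that $h_{U'_i}(\overline{X}'_i)$ is relative Tate or Artin--Tate for a double \'etale cover $Z(\overline{X}'_i)\to U'_i$; the substantive point your argument omits entirely is that this cover must be shown to be \emph{well-ramified} along the boundary $\widetilde{\Delta}_{i+1}(q)$, which the paper deduces from the Stein factorization of the relative Fano scheme of middle-dimensional planes together with Terasoma's connectedness of the Fano scheme of a singular even-dimensional quadric. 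Without well-ramifiedness, the stratum-wise objects cannot be glued into $\DMATS$ via Theorem \ref{constmot}, so your "stratum-wise assembly" step has no support.

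Two further gaps. First, the conclusion $h_S(X)\in\ATSP$ (rather than merely $\DMATSP$) requires a weight argument: the paper shows the connecting map $h_{U'_i}(\PP(F'_i))\to h^c_{U'_i}(V'_i)[1]$ in the localization triangle vanishes for weight reasons, so each $h_{U'_i}(X'_i)$ has weight zero, and then uses that the proper map $\pi$ preserves weights, so that $h_S(X)$, being a direct factor of $\pi_*h_{S'}(X')$, is of weight zero as well; your proposal never tracks weights at any stage. Second, your cone analysis via blowing up the vertex is not immediate: $X'_i$ is singular along $\PP(S_i)$ relative to $U'_i$, so the smooth blow-up formula does not apply and one only gets an abstract blow-up triangle, which would itself need a splitting or weight argument (also note the blow-up is a $\PP^{i}$-bundle over $\overline{X}'_i$, not a $\PP(S_i)$-bundle). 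The paper avoids this by using the localization triangle for the open complement $V'_i=X'_i\setminus\PP(F'_i)$, which fibers over $\overline{X}'_i$ with affine fibers, giving $h^c_{U'_i}(V'_i)\cong h_{U'_i}(\overline{X}'_i)(-i)[-2i]$ directly. The blow-up variant is repairable, but the isotropic-flag trivialization and the missing well-ramifiedness/weight-zero steps are genuine gaps in the proposal.
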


\begin{proof} 
We consider the stratifications $\Phi$ and $\FS$ introduced above and write $X_i = f^{-1}(U_i)$, $X'_i = X_i\times_{U_i}U'_i$.
% $\DMAwT(\CS_{\sigma}, \BQ)$
As we have seen before, $X_i$ is a relative cone over $\overline{X}_i$ with vertex $\PP(F_i)$. The complement $V_i = X_i\setminus\PP(F_i)$ is a locally trivial fiber bundle over 
$\overline{X}_i$ with affine fibers. 
\footnote{In fact, if  $\pi_i:\PP({E}_i)\to U_i$ is the projection map and if one chooses local trivialisations $F_i|_U\cong W\otimes\CO_U$, $E_i|_U\cong V\otimes\CO_U$ over an open subset $U\subset U_i$ then $V_i\cap\pi_i^{-1}(U)$ is the total space of the vector bundle $\CO_{U}(-1)\oplus W\otimes\CO_{U}$; cf. \cite[9.3.2]{EH}}
A similar result holds after base change via $\pi_i:U'_i\to U_i$: $X'_i$ contains the projective bundle  $\PP(F'_i)$, where $F'_i = \pi_i^*(F_i)$, and the projection of the complement $V'_i = X'_i\setminus\PP(F'_i)$ to $U'_i$ factors as $V'_i\xrightarrow{\rho_i}\overline{X}'_i\xrightarrow{\sigma_i}U'_i$ where $\rho_i$ is a locally trivial fiber bundle with affine fibers and $\sigma_i$ is a smooth quadric bundle. Hence $h^c_{U'_i}(V'_i)\cong h_{U'_i}(\overline{X}'_i)(-i)[-2i]$. As we have seen in Corollary \ref{cor:sm_quadrics},  the motive 
$h_{U'_i}(\overline{X}'_i)$ is either a relative Tate motive or a relative Artin-Tate motive associated to a double \'etale covering $Z(\overline{X}'_i)\to U'_i$. The latter case arises if $n-i$ is even, say $n-i=2m$. In this case the double covering $Z(\overline{X}'_i)\to U'_i$ comes from the Stein factorisation of the relative Fano scheme of $m$--planes 
$F_m(\overline{X}'_i/U'_i)\to U'_i$. Since we work in characteristic zero,  the map $Z(\overline{X}'_i)\to U'_i$ is tamely ramified by Remark \ref{rem: tr}.
%This map is ramified over the boundary $\widetilde{\Delta}_{i+1}(q)$ since %the quadrics over the boundary become singular, and it is known that the %Fano scheme $F_m(Q)$ of a singular quadric $Q$ of dimension $2m$ is %connected \cite[Prop. 4.1.2]{Terasoma}. Hence $Z(\overline{X}'_i)\to U'_i$ %is well--ramified.
As $h_{U'_i}(\PP(F'_i))$ is a relative Tate motive, the localisation triangle 
$$
h^c_{U'_i}(V'_i)\to h_{U'_i}(X'_i)\to h_{U'_i}(\PP(F'_i))\to h^c_{U'_i}(V'_i)[1]
$$
then shows that $h_{U'_i}(X'_i)\in \DMAwT(U'_i, \BQ)$ for all $i$. As the map $h_{U'_i}(\PP(F'_i))\to h^c_{U'_i}(V'_i)[1]$ is zero for weight reasons, it follows that $h_{U'_i}(X'_i)$ has weight zero. Hence $h_{S'}(X')\in \DMATS$ thanks to part (2) of Thm. \ref{constmot} and has weight zero by the gluing property of motivic weights. 
%Hence  $h_{S'}(X')\in \ATSP$.
%\mattia{I propose to move here the argument used at the end of the proof: i.e., the triangle shows that $h_{U'_i}(X'_i)\in \DMAwT(U'_i, \BQ)$ (maybe recall that $h_{U'_i}(\PP(F'_i))$ is Tate?) and that $h_{U'_i}(X'_i)$ is of weight zero because the last map is zero for weight reasons.} By Theorem \ref{constmot} the categories $\DMAwT(U'_i, \BQ)$ can be glued to give $\DMAwT({\cal U}', \BQ)$ and $h_{S'}(X')\in \DMAwT({\cal U}', \BQ)$. \mattia{Here one can directly say that $h_{S'}(X')\in \DMATS$ thanks to point (2) of Thm. \ref{constmot} and that moreover, because of the gluing property of motivic weights, it is of weight zero. So actually $h_{S'}(X')\in \ATS$.}

As $S' = \Fl(E)\xrightarrow{\pi} S$ is a relative homogeneous space we have
$$
\pi_*\un_{S'}\cong\un_S\oplus(\bigoplus_{i:n_i>0}\un_S(-n_i)[-2n_i]).
$$
By the projection formula we obtain
$$
\pi_*h_{S'}(X') = \pi_*\pi^*h_S(X)\cong h_S(X)\oplus(\bigoplus_{i:n_i>0}h_S(X)(-n_i)[-2n_i]).
$$
Hence $h_S(X)\in\DMATSP$.  Since proper morphisms respect weights and $h_{S'}(X')$ is of weight zero, $\pi_*h_{S'}(X')$ is of weight zero, as well as any of its direct factors. So $h_S(X) \in\ATSP$.
%\mattia{Now the rest of the proof can be replaced by the following sentences: since proper morphisms respect weights and $h_{S'}(X')$ is of weight zero, $\pi_*h_{S'}(X')$ is of weight zero too, as well as any direct factor thereof. So $h_S(X) \in\ATSP$.} 
%As the motives $h^c_{U_i}(V_i)$ and $h_{U_i}(\PP(F_i))$ have weight zero, the map $h_{U_i}(\PP(F_i))\to h^c_{U_i}(V_i)[1]$ in the localisation triangle
%$$
%h^c_{U_i}(V_i)\to h_{U_i}(X_i)\to h_{U_i}(\PP(F_i))\to h^c_{U_i}(V_i)[1]
%$$
%s zero for weight reasons. This implies that $h_{U_i}(X_i)$ has weight zero for all $i$, hence $h_S(X)$ has weight zero by the gluing property of motivic weights. 
%$$
%\pi_{i,*}\un_{U_i'}\cong\un_{U_i}\oplus(\oplus_{j:n_j>0}\un_{U_i}(-n_j)[-2n_j]).\
%$$
%\jan{To be completed.}
\end{proof}

\begin{cor} \label{quadricCH}
A regular quadric bundle $X\to S$ admits a CH-decomposition (see Def. \ref{df:CH}).
\end{cor}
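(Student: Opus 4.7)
The plan is to deduce this corollary directly from Theorem \ref{quadric_AT}. By that theorem, $h_S(X)\in\ATSP$, and the category $\ATSP$ is designed so that its objects tautologically admit a decomposition of the type demanded by Definition \ref{df:CH}. So the heart of the argument is to match the structural splitting of an arbitrary object of $\ATSP$ with the precise data of a CH-decomposition.

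First I would unpack Definition \ref{df:CH}: a CH-decomposition of $X\to S$ amounts to a finite direct sum decomposition of $h_S(X)$ into shifted Tate twists of motives of smooth projective $S$-schemes of Artin type, the relevant Artin pieces being (pushforwards of) the well-ramified double coverings $Z(\overline{X}'_i)\to U'_i$ that appeared in the proof of Theorem \ref{quadric_AT}. That proof has already exhibited, on each stratum $U'_i\subset S'=\Fl(E)$, the two building blocks $h_{U'_i}(\PP(F'_i))$ (relative Tate) and $h_{U'_i}(\overline{X}'_i)(-i)[-2i]$, the latter being a shifted Tate twist of a Tate motive when $n-i$ is odd and of the motive of the double cover $Z(\overline{X}'_i)$ when $n-i$ is even. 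The localisation triangles used in the proof split because the boundary maps vanish for weight reasons, so these stratumwise pieces assemble into an honest direct sum decomposition of $h_{S'}(X')$ of the form required on $S'$.

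The main obstacle I expect is the descent from $S'=\Fl(E)$ to $S$. Here one combines the projection formula with
$$
\pi_*\un_{S'}\cong\un_S\oplus\bigl(\bigoplus_{i:n_i>0}\un_S(-n_i)[-2n_i]\bigr)
$$
to realise $h_S(X)$ as a distinguished direct summand of $\pi_*h_{S'}(X')$, and then uses that weights are preserved by proper pushforward and pass to direct factors (as already exploited at the end of the proof of Theorem \ref{quadric_AT}) to guarantee that the resulting summands of $h_S(X)$ are of weight zero and are themselves (shifted Tate twists of) motives of smooth projective $S$-schemes of Artin type. Once this descent is in place, the CH-decomposition in the sense of Definition \ref{df:CH} can be read off directly, completing the proof.
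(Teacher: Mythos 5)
There is a genuine gap. Your proposal in effect re-runs the proof of Theorem \ref{quadric_AT} (the stratumwise splitting over $S'=\Fl(E)$ and the descent via $\pi_*\un_{S'}$ and the projection formula), but that only re-establishes $h_S(X)\in\ATSP$ of weight zero; it does not produce the decomposition demanded by Definition \ref{df:CH}. In particular, the descent step as you state it does not work: knowing that $h_S(X)$ is a direct factor of $\pi_*h_{S'}(X')$, which is a sum of nice pieces, does not by itself let you decompose $h_S(X)$ into such pieces --- a direct summand of a direct sum need not inherit a matching decomposition without a structure/semisimplicity theorem for the ambient category. That missing ingredient is exactly what the paper invokes: Corollary \ref{cor: decomposition} decomposes any object of $\ATSP$ as $\bigoplus_{\phi} i_{\phi,*}j_{\phi,!*}N_{\phi}$ (motivic intermediate extensions supported on the strata), and Theorems \ref{artatesp} and \ref{thm:Artin-Tate} further split each $N_{\phi}$ into twists of simple Artin motives. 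Your unpacking of Definition \ref{df:CH} as ``a direct sum of shifted Tate twists of motives of smooth projective $S$-schemes of Artin type'' misses this intermediate-extension structure, which is essential since the summands are supported on proper strata of $S$ (e.g.\ the Prym piece on the discriminant in the conic-bundle example).

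The second missing idea is the comparison with the topological side, which is the actual content of a Corti--Hanamura decomposition. After obtaining the decomposition of $h_S(X)$ into simple weight-zero objects $i_{\phi,*}j_{\phi,!*}(\rho_!(V_{i,\phi})(n_{i,\phi}))[2n_{i,\phi}]$, the paper applies the Betti realisation (Thm. \ref{real}) and matches the resulting simple summands of $Rf_*\QQ_X$ with the intersection complexes $i_{\lambda,*}j_{\lambda,!*}(L_{k,\lambda})$ appearing in the Beilinson--Bernstein--Deligne decomposition; uniqueness of the simple perverse constituents then shows that each such intersection complex is the realisation of a Chow motive, which is what ``CH-decomposition'' means. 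Your proposal never mentions the realisation functor, the decomposition theorem, or intersection complexes, so even granting a splitting of $h_S(X)$ you have not connected it to the perverse decomposition of $Rf_*\QQ_X$ as the definition requires.
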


\begin{proof}
By Corollary \ref{cor: decomposition} every object $M$ of the category $\ATSP$ admits a decomposition
$$
M \simeq \bigoplus\limits_{\phi \in \Phi^\prime} i_{\phi,*}j_{\phi,!*} N_{\phi}
$$
with $N_{\phi}$ in $\ATwr(\FS_{S_{\phi}}/S_{\phi})$.  The proof of Theorem \ref{artatesp} shows that $N_{\phi}\in\DMAT(S_{\phi},\QQ)$. Using Theorem \ref{thm:Artin-Tate} we obtain a decomposition
$$
N_{\phi} \simeq \bigoplus_{i,\phi \in I_{\phi}} \rho_!(V_{i,\phi})(n_{i,\phi})[2n_{i,\phi}]
$$
with $V_{i,\phi}$ a simple Artin representation of $\pi_1(S_{\phi})$.  Taking $M = h_S(X)$ we get
$$
h_S(X)\simeq\bigoplus_{i,\phi} i_{\phi,*}j_{\phi,!*}(\rho_!(V_{i,\phi})(n_{i,\phi}))[2n_{i,\phi}]
$$
which is a decomposition of $h_S(X)$ into simple objects of weight zero. Applying the realisation functor $\rho_B$ and using Thm. \ref{real}, we obtain a decomposition of $Rf_*\QQ_X$ into a sum of simple objects in $D^b_c(S,\QQ)$.  The decomposition theorem of Beilinson-Bernstein-Deligne gives isomorphisms
$$
Rf_*\QQ_X\simeq\bigoplus_k {^p}R^kf_*\QQ_X[-k]\simeq\bigoplus_{k,\lambda} i_{\lambda,*}j_{\lambda,!*}(L_{k,\lambda})[-k]
$$
where $L_{k,\lambda}$ is a local system on $S_{\lambda}$. Since the simple objects appearing in this decomposition are unique,  we conclude that they are realisations of Chow motives. Hence $X\to S$ admits a Corti-Hanamura decomposition.
\end{proof}

\begin{rem}
Our proof gives a more precise statement: $h_S(X)$ is a direct sum of motivic intermediate extensions of Artin-Tate motives of degree at most 2, hence $Rf_*\QQ_X$ decomposes as a sum of intersection complexes of  local systems whose monodromy is either trivial or $\ZZ/2\ZZ$. This has been observed in the example of regular conic bundles over a surface 
\cite{NS}. In this case $$Rf_*\QQ_X\simeq\QQ_S\oplus\QQ_S[-2]\oplus i_*L[-1]$$ where $L$ is a local system on the smooth discriminant curve $\Delta\subset S$. The underlying motivic decomposition is 
$$
h_S(X)\simeq \un_S\oplus \un_S(-1)[-2]\oplus {\rm Prym}(\widetilde{\Delta}/\Delta)(-1)[-2]
$$
where ${\rm Prym}(\widetilde{\Delta}/\Delta)$ is the Prym motive, an Artin-Tate motive of degree 2.
\end{rem}

\subsection{Corti-Hanamura decomposition of quadric bundles with normal crossing discriminant}

\begin{num}\label{num:ncd}
The aim of this final section is to show that given the previous results, it is also quite immediate to get a Corti-Hanamura decomposition for a class of quadric bundles $f:X \rightarrow S$ over a field $k$ of characteristic zero, which don't satisfy anymore the regularity assumption. We will instead suppose that the discriminant locus $i: D \hookrightarrow S$ of $f$ is a \emph{normal crossing divisor}. To fix our conventions, this means that $D$ is a finite union of irreducible components
\[
D = \bigcup\limits_{i \in I} D_i
\] 
such that for each finite subset $J \subseteq I$, the underlying reduced scheme of the scheme $D_{J}:= \bigcap\limits_{j \in J} D_j$ is regular. 

Fix now an ordering $D_{J_1}, \dots, D_{J_r}$ of the set $\{ D_{J} \}_{J \subseteq I}$, in such a way that the function $\dim D_{J_i}$ is (non-strictly) increasing with respect to $i$, and pose the following:
\end{num}

\begin{df} \label{strat_sncd}
The stratification $\FS$ of $S$ 
is the one obtained by choosing, for each $\sigma \in \FS:=\{1, \dots, r \}$,
\[
S_{\sigma}:=D_{J_{\sigma}} \setminus ( \bigcup\limits_{k=1, \cdots, \sigma-1} D_{J_k})
\]
\end{df}
Note that the above defined $\FS$ is a well-defined stratification (i.e., the closure of each stratum is a union of strata). Then, it is clear from the definitions that we have: 

\begin{lm}
The stratification $\FS$ satisfies Assumption \ref{assreg}.  
\end{lm}

Hence, we can define the full, triangulated subcategory $\DMATS$ of $\DM_c(S)$ as in Thm. \ref{constmot}. 

\begin{thm}
Let $X \rightarrow S$ be a quadric bundle whose discriminant is a divisor with normal crossings. Then, the motive $h_S(X)$ belongs to the category $\ATS$. 
\end{thm}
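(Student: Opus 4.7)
The plan is to reduce, stratum by stratum, to the smooth quadric bundle case via the constructibility criterion of Theorem \ref{constmot}(2). Since the preceding lemma establishes that $\FS$ satisfies Assumption \ref{assreg}, it suffices to prove that for every $\sigma \in \FS$, the relative motive $h_{S_\sigma}(X \times_S S_\sigma)$ lies in $\DMAT(S_\sigma, \QQ)$.

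On the open stratum $S_1 = S \setminus D$, the quadric bundle is smooth and Corollary \ref{cor:sm_quadrics} yields the claim directly. For a deeper stratum $S_\sigma \subset D_{J_\sigma}$, my approach is to argue that the restricted quadratic form $q|_{S_\sigma}$ has locally constant corank equal to $|J_\sigma|$; its kernel then defines a subbundle $F_\sigma \subset E|_{S_\sigma}$, so that $X \times_S S_\sigma$ is a relative cone with vertex $\PP(F_\sigma)$ over the smooth quadric bundle $\overline{X}_\sigma \subset \PP(E|_{S_\sigma}/F_\sigma)$ cut out by the induced non-degenerate form. With this structural statement in hand, the localization triangle used in the proof of Theorem \ref{quadric_AT} applies verbatim: the complement of the vertex is an affine bundle over $\overline{X}_\sigma$ contributing a Tate twist of $h_{S_\sigma}(\overline{X}_\sigma)$, which is Artin-Tate by Corollary \ref{cor:sm_quadrics}; the vertex $\PP(F_\sigma)$ is a projective bundle and hence of Tate type; and the connecting map vanishes for weight reasons. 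Assembling these pieces gives $h_{S_\sigma}(X \times_S S_\sigma) \in \DMAT(S_\sigma, \QQ)$, and Theorem \ref{constmot}(2) then yields $h_S(X) \in \ATS$.

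The main obstacle is the constancy-of-corank statement along each stratum $S_\sigma$. This is the structural payoff of the normal crossings hypothesis: heuristically, each irreducible component $D_j$ of the discriminant should contribute a unit jump in corank, and transversality forces the total corank along $D_{J_\sigma}$ to be exactly $|J_\sigma|$. To make this rigorous I would invoke a local normal form for families of quadratic forms: where $D$ is cut out by $u_1 \cdots u_r = 0$ with $u_1, \ldots, u_r$ part of a regular system of parameters, one shows that after a suitable change of trivialisation of $E$ the matrix of $q$ becomes block-diagonal with diagonal entries $(u_1, \ldots, u_r, 1, \ldots, 1)$, from which constancy of corank on each stratum is immediate. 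Once this is established, the remainder of the argument is a direct transcription of the analysis carried out in the proof of Theorem \ref{quadric_AT}.
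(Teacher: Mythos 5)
Your overall architecture is the same as the paper's: the paper's proof also consists of the single assertion that the corank is constant along each stratum $S_\sigma$, after which the cone/localization/weight computation of Theorem \ref{quadric_AT} is applied stratum by stratum and Theorem \ref{constmot} glues. So the second half of your argument is fine. The genuine gap is in your justification of the structural input. It is \emph{not} true that the corank along $S_\sigma\subset D_{J_\sigma}$ equals $|J_\sigma|$, and the proposed local normal form $\mathrm{diag}(u_1,\dots,u_r,1,\dots,1)$ does not exist in general. Take $S=\mathbb{A}^2$ with coordinates $u,v$, $E=\CO_S^{n+1}$, and $q=uv\,x_0^2+x_1^2+\cdots+x_n^2$. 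The discriminant divisor is $\{uv=0\}$, a reduced simple normal crossings divisor with two components, but the corank is $1$ at every point of $D$, including along $D_1\cap D_2$; in particular no change of trivialisation near the origin can bring $q$ to the form $\mathrm{diag}(u,v,1,\dots,1)$, whose corank at the origin would be $2$. So the heuristic ``each branch contributes a unit jump in corank'' fails, and with it your proof of the key lemma.

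What the argument actually needs is weaker: only that the corank is \emph{locally constant} along each stratum, with some value $c_\sigma$ which may be strictly smaller than $|J_\sigma|$; then $\ker(q|_{S_\sigma})$ is a subbundle of rank $c_\sigma$ and your cone/localization step goes through verbatim with vertex $\PP(F_\sigma)$ of that rank. But this constancy is not free either, and your normal-form strategy cannot be repaired to prove it. Two points of care: (i) the hypothesis must be read as saying that the divisor cut out by $\det q$ (with its natural scheme structure) is a reduced normal crossings divisor; if one only assumes the reduced discriminant \emph{locus} is normal crossings, constancy genuinely fails, e.g. $q=u\,x_0^2+2v\,x_0x_1+x_2^2+\cdots$ has smooth discriminant locus $\{v=0\}$ (with $\det q=-v^2$) but corank jumping from $1$ to $2$ at $u=0$ inside the unique stratum; (ii) granting that reading, the correct route is via multiplicities: $\corank q(s)=c$ forces $\det q\in\mathfrak m_s^c$, so the corank at a point lying on exactly $k$ branches is at most $k$ (in particular it is exactly $1$ on the open parts of the components), and a local splitting-off of the nondegenerate part of $q$ together with the equation $\det q = u_1\cdots u_k\cdot(\mathrm{unit})$ rules out a corank drop along a stratum. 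This is the statement you should prove (or at least carefully isolate); as written, your proof of it is based on a false claim, even though everything downstream of it is correct and coincides with the paper's argument.
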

\begin{proof}
For each $\sigma \in \FS$, the fibers of $X_{S_{\sigma}}$ over $S_{\sigma}$ have constant corank. Hence, the same computation as in the proof of Thm. \ref{quadric_AT} shows that $h_{S_{\sigma}}(X_{S_{\sigma}})$ belongs to $\DMAwT(S_{\sigma}, \BQ)$ for each $\sigma \in \FS$, and gives the conclusion. 
\end{proof}

Finally, the proof of Corollary \ref{quadricCH} can be repeated in order to show: 

\begin{cor} \label{quadricCHsncd}
A quadric bundle $X\to S$ whose discriminant is a divisor with normal crossings admits a CH-decomposition.
\end{cor}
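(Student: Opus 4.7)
The plan is to repeat the argument of Corollary \ref{quadricCH} essentially verbatim, with the corank stratification $\Phi$ replaced by the normal crossings stratification $\FS$ from Definition \ref{strat_sncd}. The only non-formal input is the preceding theorem, which places $h_S(X)$ in $\ATS$; everything else is machinery that depends only on having such an Artin--Tate description with respect to a stratification satisfying Assumption \ref{assreg}, and this assumption has been verified in the lemma immediately above the theorem.

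Concretely, I would first apply Corollary \ref{cor: decomposition} to $M = h_S(X)\in\ATS$ to obtain
\[
h_S(X)\simeq\bigoplus_{\sigma\in\FS} i_{\sigma,*} j_{\sigma,!*} N_\sigma
\]
with $N_\sigma\in\ATwr(\FS_{S_\sigma}/S_\sigma)$. As in the proof of Corollary \ref{quadricCH}, the argument of Theorem \ref{artatesp} shows that each $N_\sigma$ actually lies in $\DMAT(S_\sigma,\QQ)$. Theorem \ref{thm:Artin-Tate} then provides a further decomposition
\[
N_\sigma\simeq\bigoplus_i \rho_!(V_{i,\sigma})(n_{i,\sigma})[2n_{i,\sigma}]
\]
where each $V_{i,\sigma}$ is a simple Artin representation of $\pi_1(S_\sigma)$. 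Substituting, $h_S(X)$ is expressed as a direct sum of simple objects of weight zero in $\DMATS$.

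To close the argument, I would apply the Betti realisation $\rho_B$ and use Thm.\ \ref{real} to transfer the motivic decomposition to $D^b_c(S,\QQ)$. Comparing with the Beilinson--Bernstein--Deligne decomposition of $Rf_*\QQ_X$ and invoking uniqueness of the simple summands in the latter, the intersection complexes of local systems produced by BBD are realisations of the Chow motives constructed above, which is exactly a Corti--Hanamura decomposition. I do not foresee a substantive obstacle: the preceding theorem has done the real geometric work by placing $h_S(X)$ in $\ATS$, and the rest is a purely formal reapplication of the general machinery, whose sole hypothesis on the stratification is Assumption \ref{assreg}.
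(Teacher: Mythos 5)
Your proposal is correct and coincides with the paper's own argument: the paper proves this corollary precisely by repeating the proof of Corollary \ref{quadricCH}, with the geometric input supplied by the preceding theorem placing $h_S(X)$ in the Artin--Tate category for the normal crossings stratification. No further comment is needed.
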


We present an example where several nonconstant intersection motives appear in the decomposition. First we need a Lemma.

\begin{lm}
Let $\pi:S'\to S$ be a finite morphism. Write $U'=\pi^{-1}(U)$, $\pi'=\pi|_{U'}$ and let $j:U\to S$, $j':U'\to S$ be the inclusions.
If $F$ is a perverse sheaf on $U'$ then
$$
j_{!*}(\pi'_*F)\simeq \pi_*(j'_{!*}(F)).
$$
\end{lm}

\begin{proof}
As $\pi$ and $\pi'$ are finite morphisms, the functors $\pi_*$ and $\pi'_*$ are t-exact for the perverse t-structure \cite[Cor. 2.2.6 (i)]{BBD82}. The result then follows from the isomorphisms
$$
\pi_*\circ j'_*\simeq j_*\circ\pi'_*, \ \
\pi_*\circ j'_!\simeq j_!\circ\pi'_*
$$
using [loc.cit., 1.3.17 (iv) and 1.4.16].
\end{proof}
\begin{ex} \label{ex_manyIC}
Let $r$ be a nonnegative integer. Given a (r+1)x(2n+3) matrix $A=(a_{ij})$ we define quadrics  
$$
Q_i = \{x\in\PP^{2n+2}| \sum_j a_{ij}x_j^2 = 0\}\subset\PP^{2n+2},\ \ i=0,\ldots,r.
$$
The quadric bundle associated to the linear system of quadrics spanned by $Q_0,\ldots,Q_r$ is $f:X\to\PP^r$ with 
$$
X = \{(x,\lambda)\in\PP^{2n+2}\times\PP^r|\sum_{i,j} \lambda_i a_{ij}x_j^2=0\}.
$$
The fiber $Q_{\lambda}$ of $f$ over $\lambda = (\lambda_0:\ldots:\lambda_r)\in\PP^r$ is the zero locus of
$$
q_{\lambda} = \sum_j(\sum_i a_{ij}\lambda_i)x_j^2.
$$
If $A$ is a generic matrix, the discriminant locus of $f$ is a simple normal crossing divisor which is the union of $2n+3$ hyperplanes: $\Delta = D_0\cup\ldots\cup D_{2n+2}$ where
$$
D_j = \{\lambda\in\PP^r| \sum_i {a_{ij}}\lambda_i = 0\},\ \ j=0,\ldots,2n+2.
$$
By the previous Corollary $f:X\to S$ admits a CH-decomposition.
We shall show that it contains several nonconstant intersection motives by looking at the topological realisation. Write $d_X= \dim X = 2n+r+1$.
By the decomposition theorem \footnote{For ease of comparison with the existing literature we have adopted the usual convention of shifting by $d_X$.}
$$
Rf_*(\QQ_X[d_X])\simeq\bigoplus_{i=-2n-1}^{2n+1}{^p R}^i f_*(\QQ_X[d_X])[-i]
$$
and the perverse direct images ${^p R}^i f_*(\QQ_X[d_X])$ are direct sums of intermediate extensions. By the perverse versions of the Lefschetz hyperplane theorem and the hard Lefschetz theorem we have
$$
{^p R}^i f_*(\QQ_X[d_X]) = \left\{ 
\begin{array}{cc}
\QQ[r] & i\ne 0, i \hbox{\rm\ even\ } \\
0 & i\ne 0, i\hbox{\rm\ odd\ }.
\end{array}
\right.
$$
hence
$$
Rf_*\QQ_X[d_X]\simeq\bigoplus_{k=-n}^{n}\QQ[r-2k-1]\oplus {^p R}^0 f_*(\QQ_X[d_X]).
$$
Write 
$$
\Delta_i = \cup_{j\ne i} D_i\cap D_j,\ \ 
S_i = D_i\setminus\Delta_i.
$$
If $\lambda\in S_i$ then $Q_{\lambda}$ has an ordinary double point $p_{\lambda}$ and $Q_{\lambda}$ is a cone
 with vertex $p_{\lambda}$ over a smooth quadric $\overline{Q_{\lambda}}\subset\PP^{2n+1}$.
 Hence 
$$
R^{2n+2}f_*\QQ|_{S_i} = \QQ\oplus L_i
$$
where $L_i$ is a nontrivial rank one local system on $S_i$ with monodromy 
$\ZZ/2\ZZ$. The decomposition theorem then shows that
$$
R^{2n+2}f_*\QQ|_{S_i} = {\cal H}^{-r+1}(Rf_*\QQ_X[d_X]|_{S_i}) = \QQ[r-1]\oplus{\cal H}^{-r+1}({^p R}^0 f_*(\QQ_X[d_X]))
$$
for all $i$. Thus, writing $j_{\alpha}:S_{\alpha}\to\PP^r$ for the inclusion maps, 
$$
{^p R}^0 f_*(\QQ_X[d_X]) \simeq \bigoplus_{\alpha = 0}^r ({j_{\alpha}})_{!*}(L_{\alpha}) \oplus R
$$
where $R$ is a perverse sheaf supported in codimension $\ge 2$. Hence the CH-decomposition of $X\to S$ is of the form
$$
h_S(X)\simeq\bigoplus_{k=0}^n \un(-k)[-2k]\oplus\bigoplus_{i=0}^r M_i \oplus M
$$
where $\rho(M)=R$ and $\rho(M_i) = j_{!*}(L_i)$ for all $i$. 

\medskip 
\begin{rem}
In the case $r=2$ there is a direct construction of the motive $M_i$ underlying the intermediate extension of $L_i$. In this case $D_i\cong\PP^1$ and $\Delta_i\subset D_i$ is a set of $2n+2$ points. Let $D'_i\to D_i$ be the double covering ramified along $\Delta_i$. Then $D'_i$ is a smooth hyperelliptic curve of genus $n$. Applying the previous lemma to the diagram
\begin{center}
\begin{tabular}{c}
\xymatrix{
S_i'\ar[d]^{\pi'} \ar[r]^{j'} &   D'_i \ar[d]^{\pi} \\ 
S_i \ar[r]^{j} &   D_i}
\end{tabular}
\end{center}
we obtain 
$
\pi_*(j'_{!*}\QQ)\simeq \QQ\oplus j_{!*}(L_i)
$
since $\pi'_*\QQ\simeq\QQ\oplus L_i$. 
 
If we denote $\tau$ the hyperelliptic involution on $D'_i$, the intermediate extension $j_{!*}(L_i)$ is the realisation of the Prym motive $M_i = (D'_i,\frac{1}{2}(\tau^*+{\rm id}))$. 

It is not clear whether there exists a similar description of the motives $M_i$ if $r>2$. In this case the variety $D'_i$ is singular, hence $M_i$ should be constructed as a suitable direct factor of the "intersection motive" of $D'_i$.     
\end{rem}

\begin{rem}
If we take $n=0$ and $r=2$ in the example above (conic bundles over $\PP^2$)  the result follows from \cite{NS} (The result in [loc.cit.] is more general and applies to conic bundle over a surface with a reducible discriminant locus that is not necessarily a simple normal crossing divisor.)  
\end{rem}

\end{ex}

\bibliographystyle{amsalpha}
\bibliography{decomposition}
%\Addresses

\end{document}